\newtheorem{theorem}{Theorem}[section]
\newtheorem{lemma}[theorem]{Lemma}
\newtheorem{corollary}[theorem]{Corollary}
\newtheorem{definition}{Definition}
\newtheorem{remark}{Remark}
\newcommand{\rms}{{\cal{M}}_k(\mathbb{R})}
\newcommand{\sumd}[1]{\sideset{}{'}\sum_{n=0}^{#1}}
\DeclareMathOperator{\tr}{tr}
\DeclareMathOperator{\diag}{diag}
\DeclareMathOperator{\erf}{erf}
\begin{document}

\title{Evaluating Non-Analytic Functions of Matrices}

\author[1]{Nir Sharon\thanks{nsharon@math.princeton.edu} }
\author[2]{Yoel Shkolnisky\thanks{yoelsh@post.tau.ac.il}}
\affil[1]{\footnotesize{The Program in Applied and Computational Mathematics, Princeton University, Princeton NJ, USA}}
\affil[2]{\footnotesize{School of Mathematical sciences, Tel-Aviv University, Tel-Aviv, Israel}}

\date{}

\maketitle
\begin{abstract}
The paper revisits the classical problem of evaluating $f(A)$ for a real function $f$ and a matrix $A$ with real spectrum. The evaluation is based on expanding $f$ in Chebyshev polynomials, and the focus of the paper is to study the convergence rates of these expansions. In particular, we derive bounds on the convergence rates which reveal the relation between the smoothness of $f$ and the diagonalizability of the matrix A. We present several numerical examples to illustrate our analysis. 
\end{abstract}
\textbf{Key Words.} Matrix functions, Chebyshev polynomials, Matrix Chebyshev expansion, Convergence rates, Jordan blocks.




\section{Introduction}

We revisit the problem of lifting a real function $f \colon \mathbb{R} \to \mathbb{R}$ to a matrix function $f \colon \rms \to \rms$, where $\rms$ is the set of square real matrices of size $k \times k$ having real spectrum. When $f$ is a polynomial, such lifting is straightforward since addition and powers are well-defined for square matrices. When $f$ is not a polynomial, there are several standard methods to define the above-mentioned lifting. If $f$ is analytic having a Taylor expansion whose convergence radius is larger than the spectral radius of $A$, then the Taylor expansion $f(x) =\sum_{n=0}^\infty \alpha_nx^n$ yields $f(A) = \sum_{n=0}^\infty \alpha_nA^n$. If $f$ is not analytic, it is required that at least $f \in C^{m-1}$, where $m$ is the size of the largest Jordan block of $A$. This condition allows defining $f(A)$ on each of the Jordan blocks of $A$. This latter approach has several equivalent definitions, see e.g. \cite[Chapter 1]{higham2008functions}.

Chebyshev polynomials are ubiquitous in applied mathematics and engineering\footnote{``Chebyshev polynomials are everywhere dense in numerical analysis". This quote by Philip Davis and George Forsythe is the opening sentence of \cite{mason2010chebyshev}.}. These polynomials arise as solutions of a Sturm-Liouville ODE and are used in numerous approximation methods, ranging from classical PDE methods~\cite{mason2010chebyshev} to modern methods for image denoising~\cite{may2016algorithm}. Motivated by the favorable numerical properties of Chebyshev polynomials in representing and approximating scalar functions, we rigorously study the use of Chebyshev expansions for matrix functions. The idea of evaluating a matrix function by its Chebyshev expansion is not new and has been used in applications before. For example, \cite{tal1984accurate} uses Chebyshev expapansion in spectral methods for solving PDEs. In this context of solving PDEs, the Chebyshev polynomials are also examined as a special case of ultraspherical polynomials~\cite{doha1991coefficients}, and Faber polynomials~\cite{tal1989polynomial}. Expansion in the Faber polynomials (for complex spectra) appears in~\cite{novati2001computation} for matrices. Chebyshev polynomials are also widely used for pseudospectral methods, see~\cite{trefethen2000spectral} and reference therein. In all of the above papers, one assumes that $f$ is a smooth function. One of the most frequently used analytic function is the exponential function, naturally arising in  solving differential equations. In~\cite{bergamaschi2000efficient} Chebyshev polynomials are proved to be an effective alternative to Krylov techniques for calculating $\exp(A)v$, for a given vector $v$. Another application of Chebyshev polynomials is in representing the best matrix $2$-norm approximation for analytic functions, over the space of polynomials of a fixed degree~\cite{liesen2009best}. In \cite{di2016efficient,saad2011numerical} matrix Chebyshev polynomials are used for slicing the spectrum of a matrix in order to extract interior eigenvalues. Another kind of spectrum filtering is presented in~\cite{may2016algorithm} for the construction of image denoising operators. We can also find matrix Chebyshev polynomials in calculating matrix functions of symmetric matrices~\cite{druskin1989two}, computing square roots of the covariance matrix of Gaussian random fields~\cite{dietrich1995efficient} and facilitating the estimation of autoregressive models~\cite{pace2004chebyshev}.

\subsection*{Our contribution}

In this paper, we generalize the study of matrix Chebyshev expansions to cases where the matrix is not necessarily diagonalizable, and the function is not necessarily analytic. In such cases, there exists a trade-off between two factors; ``how much" the matrix is far from being diagonalizable, as expressed by the size of the largest Jordan block in the Jordan form of the matrix, and ``how smooth is the function''. Specifically, as the size of the largest Jordan block increases (``less diagonalizable") the smoothness of the function required to guarantee the convergence of the matrix Chebyshev expansion, increases as well. 

In the current paper, we mainly focus on examining convergence issues and proving convergence rates. The convergence rate of the matrix Chebyshev expansion is crucial for determining how many coefficients one has to use to approximate $f(A)$ to a prescribed accuracy using a truncated Chebyshev expansion. Thus, the convergence rate directly affects the efficiency of approximation algorithms that use matrix Chebyshev expansions. 

Chebyshev polynomials are naturally define on $[-1,1]$, so without loss of generality, we assume that the spectrum of $A$ is linearly transformed to this segment using an estimated bound on the spectral radius of $A$ (for example, by estimating the eigenvalue of $A$ with the largest magnitude). We henceforth assume that all eigenvalues of $A$ lie in [-1,1]. Given this assumption, we divide our analysis into two cases: a case where there is no a priori information about the distribution of the eigenvalues over $[-1,1]$, and a case where all non-semisimple eigenvalues are concentrated inside $(-1,1)$. As expected, the latter case implies fewer restrictions on the smoothness of the function. In particular, if we denote by $m$ the size of the largest Jordan block of $A$. Then, the convergence of the matrix Chebyshev expansion is guaranteed in the first case for any $f \in C^{2m-2}$ where $f^{(2m-1)}$ is of bounded variation. In the second case, it is guaranteed for any $f \in C^{m-1}$ where $f^{(m)}$ is of bounded variation. As for convergence rates, we summarize our results for the different cases in Table~\ref{tab:convergenceResult}. Each row in the table presents the requirements, from both the matrix and the function, so that an expansion of length $N$ has an error bound of $cN^{-\ell}$ ($\ell>1$). The constant $c$ is independent of $N$ and $\ell$ but does depend on the factors specified in the 'Remarks' column. We note two properties from the table. First, for matrices with a concentrated spectrum, the smoothness assumptions on $f$ are weaker than in the general case. Second, one can guarantee a constant which is independent of the size of the matrix by imposing additional regularity on the function $f$. Further conclusions and their proofs appear in the text. 

 \begin{table}\centering
 \scalebox{0.85}{
  \begin{tabular}{c|c|c}
    Spectrum of $A$  & The function & Remarks \hfill \\ \hline\hline
     & &  \\
    $A$ is a $\delta$-condensed matrix   & $f \in C^{m+\ell-2}$ and $f^{(m+\ell-1)}$ is of bounded variation &  \parbox{6cm}{All non-semisimple \\ eigenvalues of $A$ are in $[-1+\delta,1-\delta]$ \\ The constant depends on $k$, $f$, $m$ } \\
    & &  \\  \hline
    & &  \\
   In $[-1,1]$   & $f \in C^{2m+\ell-3}$ and $f^{(2m+\ell-2)}$ is of bounded variation &  \parbox{6cm}{The constant depends on $k$, $f$, $m$} \\
    & &  \\  \hline
    & &  \\
    $A$ is a $\delta$-condensed matrix   & $f \in C^{m+2\ell-1}$ and $f^{(m+2\ell)}$ is of bounded variation &  \parbox{6cm}{All non-semisimple \\ eigenvalues of $A$ are in $[-1+\delta,1-\delta]$ \\ The constant depends on $f$, $m$ } \\
    & &  \\ \hline
    & &  \\
    In $[-1,1]$ & $f \in C^{2m+2\ell}$ and $f^{(2m+2\ell+1)}$ is of bounded variation &  \parbox{6cm}{The constant depends on $f$ and $m$}  \\ 
  \end{tabular}  }
  \caption{Table of conditions to establish a convergence rate of order $N^{-\ell}$ of a truncated matrix Chebyshev expansion of length $N$. The matrix is of size ${k\times k}$ and with $m$ as the size of its largest Jordan block.}
  \label{tab:convergenceResult}
\end{table}

\parbox{4cm}{}
\subsection*{The structure of the paper}

In Section~\ref{sec:background} we present our notation and the required mathematical background. In Section~\ref{sec:Mat_Cheb_exp} we introduce the matrix Chebyshev expansion and our main theoretical results. These results include the conditions for convergence of a matrix Chebyshev expansion to the matrix function and bounds on the convergence rates. We illustrate some of our theoretical results using numerical examples in Section~\ref{sec:numerics}.


\section{Background and notation} \label{sec:background}

Chebyshev polynomials of the first kind of degree $n$ are defined as
\begin{equation} \label{eqn:Cheby_polynomial_trig_def}
 T_n(x) = \cos \left( n \arccos(x) \right) , \quad x \in [-1,1] , \quad n = 0,1,2,\ldots
\end{equation}
These polynomials are solutions of the Sturm-Liouville ordinary differential equation
\begin{equation} \label{eqn:ODE}
(1-x^2)y^{\prime \prime } - xy^\prime +n^2y=0
\end{equation}
and satisfy the three term recursion
\begin{equation} \label{eqn:three_term_recursion}
  T_n(x) = 2xT_{n-1}(x) - T_{n-2}(x) , \quad n=2,3,\ldots
\end{equation}
with $T_0(x)=1$ and $T_1(x) = x$. Therefore, Chebyshev polynomials form an orthogonal basis for $L_2([-1,1])$ with respect to the inner product
\begin{equation}  \label{eqn:inner_product}
\left\langle  f,g \right\rangle_T =  \frac{2}{\pi} \int_{-1}^1 \frac{f(t)g(t)}{\sqrt{1-t^2}} dt .
\end{equation}
The Chebyshev expansion of a function $f$ with a finite norm with respect to \eqref{eqn:inner_product} is 
\begin{equation} \label{eqn:chebyshev_series_scalars}
 f(x) \sim \sumd{\infty} \alpha_n[f] T_n(x) , \quad \alpha_n[f] = \left\langle f, T_n \right\rangle_T ,
\end{equation}
where the dashed sum \resizebox{.33in}{!}{$\left( \sideset{}{'}\sum \right)$} denotes that the first term is halved. The truncated Chebyshev expansion is defined as 
\begin{equation} \label{eqn:truncated_chebyshev}
  \mathcal{S}_N(f)(x)  = \sumd{N} \alpha_n[f]T_n(x)  .
\end{equation}
$\mathcal{S}_N(f)(x)$ is a polynomial approximation of $f$ which is the best least squares approximation with respect to the induced norm $\norm{f}_T = \sqrt{\left\langle  f,f \right\rangle_T}$. Remarkably, this least squares approximation is close to the best minimax polynomial approximation, measured by the maximum norm $\norm{f}_\infty = \max_{x\in [-1,1]} \abs{f(x)}$. The following theorem was established by Bernstein \cite{bernstein1918quelques} back in 1918 (and was known even before, see references therein).
\begin{theorem}[Bernstein]
For any $f \in C([-1,1])$ and $N \ge 0$
\[ \norm{f(x) - S_N(f)(x)  }_{\infty} \le  \Lambda_N \norm{f(x) - p^\ast_N(x)  }_{\infty} , \]
where $p^\ast_N$ is the unique best minimax polynomial approximation of degree $N$, and $\Lambda_N$ behaves asymptotically as $\log(N)$ for large $N$.
\end{theorem}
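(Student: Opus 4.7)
The plan is to exploit the fact that $\mathcal{S}_N$ is a linear projection onto the space $\Pi_N$ of polynomials of degree at most $N$, and then to reduce the theorem to the classical estimate of the Chebyshev Lebesgue constant. First I would observe that since $p_N^\ast \in \Pi_N$, the orthogonality relations of the Chebyshev polynomials imply $\mathcal{S}_N(p_N^\ast) = p_N^\ast$. Hence, by linearity,
\[
  f - \mathcal{S}_N(f) = (f - p_N^\ast) - \mathcal{S}_N(f - p_N^\ast),
\]
and taking the uniform norm on $[-1,1]$ yields
\[
  \norm{f - \mathcal{S}_N(f)}_\infty \le (1 + \norm{\mathcal{S}_N}) \, \norm{f - p_N^\ast}_\infty,
\]
where $\norm{\mathcal{S}_N}$ denotes the operator norm of $\mathcal{S}_N$ acting on $(C([-1,1]), \norm{\cdot}_\infty)$. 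Setting $\lambda_N := 1 + \norm{\mathcal{S}_N}$, this already establishes the inequality in the theorem, and reduces the remaining task to showing that $\norm{\mathcal{S}_N}$ grows asymptotically as $\log N$.

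Next, I would derive an explicit integral representation for $\mathcal{S}_N$. Substituting the formula for $\alpha_n[f]$ from \eqref{eqn:chebyshev_series_scalars} into \eqref{eqn:truncated_chebyshev} gives
\[
  \mathcal{S}_N(f)(x) = \frac{2}{\pi}\int_{-1}^{1} f(t) K_N(x,t)\,\frac{dt}{\sqrt{1-t^2}}, \qquad K_N(x,t) = \frac{1}{2} + \sum_{n=1}^{N} T_n(x) T_n(t).
\]
Using \eqref{eqn:Cheby_polynomial_trig_def} and the product-to-sum identity $2T_n(\cos\phi)T_n(\cos\theta) = \cos(n(\phi-\theta)) + \cos(n(\phi+\theta))$, with the substitutions $x=\cos\phi$ and $t=\cos\theta$, the kernel $K_N$ reduces to a sum of two classical Dirichlet kernels. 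A standard computation then gives
\[
  \norm{\mathcal{S}_N} = \max_{\phi \in [0,\pi]}\; \frac{1}{\pi}\int_{0}^{\pi}\bigl|D_N(\theta-\phi) + D_N(\theta+\phi)\bigr|\,d\theta,
\]
where $D_N$ is the Dirichlet kernel of degree $N$.

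The main obstacle is the asymptotic analysis of this maximum, which I expect to be the hardest part. I would invoke the well-known Lebesgue constant estimate $\frac{1}{\pi}\int_0^\pi |D_N(\theta)|\,d\theta = \frac{4}{\pi^2}\log N + O(1)$, and then argue, by splitting the integration range near the singularities of the Dirichlet kernel, that the cross term contributed by $D_N(\theta+\phi)$ is uniformly bounded in $\phi$, so that the supremum over $\phi$ of the integral above behaves like $\frac{4}{\pi^2}\log N + O(1)$. Combining this with $\lambda_N = 1 + \norm{\mathcal{S}_N}$ yields the asymptotic $\lambda_N \sim \frac{4}{\pi^2}\log N$ claimed in the theorem. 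This last maneuver, establishing the uniform-in-$\phi$ bound on the shifted Dirichlet integral and identifying the leading constant, is the most delicate ingredient; all the rest is a routine consequence of the projection property and Chebyshev orthogonality.
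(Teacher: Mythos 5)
The paper does not actually prove this statement: it is quoted as a classical result, with the inequality attributed to Bernstein \cite{bernstein1918quelques} and the exact value of the Lebesgue constant to \cite{powell1967maximum}. So there is no internal proof to compare against; what you have written is the standard textbook argument, and its skeleton is sound. The projection step is correct: since $\mathcal{S}_N$ reproduces every polynomial of degree at most $N$ (by Chebyshev orthogonality), linearity gives $f-\mathcal{S}_N(f)=(f-p_N^\ast)-\mathcal{S}_N(f-p_N^\ast)$ and hence the bound with $\lambda_N=1+\norm{\mathcal{S}_N}$. The reduction of $\norm{\mathcal{S}_N}$ to the maximum over $\phi$ of $\frac{1}{\pi}\int_0^\pi\abs{D_N(\theta-\phi)+D_N(\theta+\phi)}\,d\theta$ via $x=\cos\phi$, $t=\cos\theta$ and the product-to-sum identity is also correct, and the classical Fourier Lebesgue-constant asymptotics then yield $\lambda_N=O(\log N)$ with a matching lower bound at $\phi=0$, which is all the theorem as stated requires.

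The one step you should repair is the claim that the contribution of $D_N(\theta+\phi)$ is \emph{uniformly} bounded in $\phi$: it is not. Near $\phi=0$ (and $\phi=\pi$) the interval $[\phi,\pi+\phi]$ meets the singularity of the Dirichlet kernel, and at $\phi=0$ this term contributes the same $\sim\frac{2}{\pi^2}\log N$ as the first term. The clean fix is to note that $\int_0^\pi\abs{D_N(\theta-\phi)}\,d\theta+\int_0^\pi\abs{D_N(\theta+\phi)}\,d\theta=\int_{-\phi}^{\pi-\phi}\abs{D_N}+\int_{\phi}^{\pi+\phi}\abs{D_N}$, and these two windows occupy disjoint portions of a single period, so their sum is at most $\int_{-\pi}^{\pi}\abs{D_N(u)}\,du$; since this upper bound is attained (up to $O(1)$) at $\phi=0$, the maximum over $\phi$ equals the full-period Lebesgue integral up to $O(1)$, giving $\norm{\mathcal{S}_N}=\frac{4}{\pi^2}\log N+O(1)$. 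Relatedly, check your normalization of $D_N$: with $D_N(u)=\frac{\sin((N+1/2)u)}{2\sin(u/2)}$ one has $\frac{1}{\pi}\int_0^\pi\abs{D_N}=\frac{2}{\pi^2}\log N+O(1)$, not $\frac{4}{\pi^2}\log N$. These are constant-level corrections only; the $\log N$ asymptotic claimed in the theorem survives, and the precise constant is anyway beyond what the statement asserts.
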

The constant $\Lambda_N$ is the Lebesgue constant, and a formula for its exact value has been derived in~\cite{powell1967maximum} (and in particular, it is less than $6$ for $N<1000$). Note that on the boundaries of $[-1,1]$, we only consider one-sided continuity (and derivatives when required). 

To take advantage of recent results for Chebyshev expansions, we introduce the total variation (TV) norm 
\[\norm{g}_{TV} = \norm{g^\prime}_{1} =   \int_{-1}^1 \abs{ g^{\prime}(t)} dt . \]
Note that this norm can be written in a distributional form instead of the derivative form and thus we say that a function has a bounded variation if its TV norm is finite (whether continuous or not, e.g., the step function). Using the TV norm, it was recently shown (see Theorem~\ref{thm:Trefethen} below) that for a smooth $f$, the uniform error $\norm{f(x) - S_N(f)(x)  }_{\infty}$ decays rapidly \cite[Chapter 7]{trefethen2013approximation}.
\begin{theorem}[Trefethen] \label{thm:Trefethen}
Let $f \in C^{m-1}([-1,1])$, such that $f^{(m-1)}$ is absolutely continuous and that $\norm{f^{(m)}}_{TV} <\infty$. Then, for each $N>m$, the Chebyshev coefficients satisfy
\[ \alpha_N[f] \le \frac{C\cdot m}{N(N-1)\cdots(N-m)} \le  (C\cdot m) \frac{1}{(N-m)^{m+1}} ,\]
and moreover,
\[ \norm{f(x) - \mathcal{S}_N(f)(x)  }_{\infty} \le  C \frac{1}{(N-m)^m} , \]
where $C = C(f,m) = \frac{2}{\pi m}\norm{f^{(m)}}_{TV}$.
\end{theorem}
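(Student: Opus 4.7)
The strategy is an induction on $m$ driven by a one-step recurrence relating the Chebyshev coefficients of $f$ and those of $f'$, followed by a summation argument to pass from the coefficient bound to the uniform bound. The main obstacle will be combining the two terms produced by the recurrence into the clean product denominator $N(N-1)\cdots(N-m)$ claimed in the theorem.

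First, via the substitution $t=\cos\theta$ the Chebyshev coefficients become Fourier cosine coefficients,
$$\alpha_n[f] = \frac{2}{\pi}\int_0^\pi f(\cos\theta)\cos(n\theta)\,d\theta.$$
The base case of the induction (in which only $f$ itself is of bounded variation) is handled by a single Riemann--Stieltjes integration by parts against $d[f(\cos\theta)]$, which yields $|\alpha_k[f]| \le 2\|f\|_{TV}/(\pi k)$: the boundary terms vanish because $\sin(k\theta)=0$ at $\theta=0,\pi$, and the total variation of $f(\cos\theta)$ on $[0,\pi]$ equals $\|f\|_{TV}$ by the monotonicity of $\cos$ there.

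For the inductive step, I would derive the recurrence
$$\alpha_n[f] = \frac{1}{2n}\bigl(\alpha_{n-1}[f'] - \alpha_{n+1}[f']\bigr), \qquad n\ge 1,$$
by one integration by parts in $\theta$ followed by the product-to-sum identity $\sin\theta\,\sin(n\theta) = \tfrac12[\cos((n-1)\theta) - \cos((n+1)\theta)]$. Since $f'$ inherits the theorem's hypotheses with $m-1$ in place of $m$ (and with the same $\|f^{(m)}\|_{TV}$ on the right-hand side), the inductive hypothesis applied to $f'$ yields
$$|\alpha_k[f']| \le \frac{2\|f^{(m)}\|_{TV}}{\pi\,k(k-1)\cdots(k-m+1)}.$$
The main obstacle is then resolved by the elementary comparison
$$\frac{1}{(N-1)(N-2)\cdots(N-m)} + \frac{1}{(N+1)N\cdots(N-m+2)} \le \frac{2}{(N-1)(N-2)\cdots(N-m)},$$
which, after canceling the common factors $(N-1)(N-2)\cdots(N-m+2)$, reduces to $(N-m+1)(N-m)\le (N+1)N$, valid for $N \ge m+1$. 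Combining with the prefactor $1/(2N)$ from the recurrence gives
$$|\alpha_N[f]| \le \frac{2\|f^{(m)}\|_{TV}}{\pi N(N-1)(N-2)\cdots(N-m)} = \frac{Cm}{N(N-1)\cdots(N-m)},$$
closing the induction. The second inequality in the theorem is then immediate from $N(N-1)\cdots(N-m)\ge (N-m)^{m+1}$.

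Finally, for the uniform bound, since $|T_n(x)|\le 1$ on $[-1,1]$, the triangle inequality together with the coefficient bound gives
$$\|f-\mathcal{S}_N(f)\|_\infty \le \sum_{n=N+1}^\infty |\alpha_n[f]| \le Cm\sum_{n=N+1}^\infty \frac{1}{(n-m)^{m+1}} \le Cm\int_{N-m}^\infty \frac{dx}{x^{m+1}} = \frac{C}{(N-m)^m},$$
which is the claimed estimate.
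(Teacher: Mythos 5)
Your proposal is correct, and since the paper itself offers no proof of this theorem (it is quoted from Trefethen's \emph{Approximation Theory and Approximation Practice}, Theorems 7.1--7.2), the right comparison is with that standard argument, which yours essentially reproduces: transplanting to $\theta=\arccos x$, integrating by parts to get the recurrence $\alpha_n[f]=\tfrac{1}{2n}\bigl(\alpha_{n-1}[f']-\alpha_{n+1}[f']\bigr)$, inducting on $m$ with the BV base case, and summing the coefficient tail against $|T_n|\le 1$. The only point left implicit is that the Chebyshev series actually converges uniformly to $f$ (so that the triangle-inequality tail bound applies), which follows from the absolute summability of the coefficients together with $L^2_w$-convergence and continuity of $f$; otherwise the steps, including the cancellation inequality $(N-m+1)(N-m)\le (N+1)N$ and the integral comparison giving $\tfrac{C}{(N-m)^m}$, are all valid.
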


The trigonometric form \eqref{eqn:Cheby_polynomial_trig_def} implies that $T_n(\cos(\theta)) = \cos(n\theta)$, $n=0,1,2,\ldots$, and reveals that the Chebyshev expansion of a given $f(x)$, $x \in [-1,1]$, coincides with the Fourier cosine series $g(\theta) = \sumd{\infty} \alpha_n[f] \cos(n \theta)$ where $g(\theta) = f(\cos(\theta))$ with $\theta \in [0,\pi]$. The argument $\cos(\theta)$ in $f$ makes $g$ even and periodic in $\theta$, which in fact implies that the derivatives of $g$ are also equal on the boundaries, that is $g^{(j)}(\pi) = g^{(j)}(-\pi)$, $j \in \mathbb{N}$. Thus, no periodicity is required from $f$ nor from its derivatives to ensure that $g$ is differentiable and its Fourier series can be differentiated term by term. We conclude the above discussion with the next lemma.
\begin{lemma} \label{lemma:Chebyshev_expansion_prop_from_fourier}
Let $f \in C^{2m-2}([-1,1])$ be such that $f^{(2m-2)} $ is absolutely continuous with $f^{(2m-1)} $ of bounded variation, $m \in \mathbb{N}$. Then,
\begin{enumerate}
\item
The series
\[ \sumd{\infty} \alpha_n[f] T^{(j)}_n(x) , \quad j=0,1,\ldots,m-1   ,\quad x \in [-1,1] ,  \]
is absolutely convergent.
\item
The series~\eqref{eqn:chebyshev_series_scalars} can be differentiated term-by-term, yielding
\[   f^{(j)}(x) = \sumd{\infty} \alpha_n[f] T^{(j)}_n(x) , \quad j=0,1,\ldots,m   ,\quad x \in [-1,1] .  \]
\end{enumerate}
\end{lemma}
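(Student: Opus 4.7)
The plan is to combine a decay estimate for the Chebyshev coefficients $\alpha_n[f]$ with a growth estimate for the derivatives $T_n^{(j)}$, and then to invoke the classical term-by-term differentiation theorem for uniformly convergent series. For the coefficients I would apply Theorem~\ref{thm:Trefethen} with its ``$m$'' reassigned to $2m-1$: the lemma's hypotheses ($f\in C^{2m-2}$, $f^{(2m-2)}$ absolutely continuous, $f^{(2m-1)}$ of bounded variation) match Trefethen's exactly under this substitution, and thereby yield $|\alpha_n[f]|\le C_1/(n-(2m-1))^{2m}$, and in particular $\alpha_n[f]=O(n^{-2m})$. For the Chebyshev derivatives I would use the standard pointwise bound $|T_n^{(j)}(x)|\le n^{2j}/(2j-1)!!$ on $[-1,1]$, which is sharp at $x=\pm 1$ and follows from the explicit formula $T_n^{(j)}(1)=\prod_{i=0}^{j-1}(n^2-i^2)/(2j-1)!!$ together with an iteration of the recursion~\eqref{eqn:three_term_recursion}.

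Combining the two estimates, the series $\sum_n |\alpha_n[f]|\,|T_n^{(j)}(x)|$ is majorized uniformly on $[-1,1]$ by a constant multiple of $\sum_n n^{-2(m-j)}$, which converges whenever $j\le m-1$; this proves part~1. Part~2 then follows by induction on $j$. The base case $j=0$ is the uniform convergence $f=\sum\alpha_n[f]T_n$, supplied by Trefethen's uniform error bound (or equivalently by part~1 at $j=0$). In the inductive step I would apply the classical theorem that uniform convergence of the termwise-derived series together with pointwise convergence of the original series upgrades the identity one derivative at a time; for $j=1,\ldots,m-1$ the required uniform convergence is exactly what part~1 provides.

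The main obstacle is the borderline case $j=m$: at $x=\pm 1$ the bound $|T_n^{(m)}(\pm 1)|\asymp n^{2m}$ cancels exactly against the decay $n^{-2m}$, so absolute convergence on the closed interval is unavailable. The remedy I would use is to split the interval. On any compact subset of $(-1,1)$, the sharper estimate $|T_n^{(m)}(x)|\le C(x)\,n^m$ with $C(x)=O((1-x^2)^{-m/2})$, derived via the substitution $x=\cos\theta$ and the iterated identity $\frac{d}{dx}=-(\sin\theta)^{-1}\frac{d}{d\theta}$, combines with the $n^{-2m}$ coefficient decay to give absolute and locally uniform convergence; the term-by-term differentiation theorem then yields $f^{(m)}(x)=\sum\alpha_n[f]T_n^{(m)}(x)$ for every $x\in(-1,1)$. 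To close the identity at $x=\pm 1$ I would pass through the Fourier cosine series $g(\theta)=f(\cos\theta)=\sum\alpha_n[f]\cos(n\theta)$, which inherits the regularity $g\in C^{2m-2}$ with $g^{(2m-1)}$ of bounded variation and can be differentiated $2m-2$ times absolutely in $\theta$; the continuity of $f^{(m)}$ on $[-1,1]$ together with an Abel-type summation at $\theta=0,\pi$ then delivers the remaining pointwise equality at the endpoints.
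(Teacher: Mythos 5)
For part~1, and for part~2 in the range $j\le m-1$, your argument is exactly the paper's: Theorem~\ref{thm:Trefethen} applied with its smoothness parameter set to $2m-1$ gives $\abs{\alpha_n[f]}=O(n^{-2m})$, the bound \eqref{eqn:jordan_block_form} gives $\abs{T_n^{(j)}(x)}\le n^{2j}/\bigl((2j-1)\cdots 3\cdot 1\bigr)$ on all of $[-1,1]$, the product is majorized by a constant times $n^{-2(m-j)}$, and the Weierstrass M-test together with the classical term-by-term differentiation theorem (applied one derivative at a time, as in your induction) yields $f^{(j)}=\sum_n'\alpha_n[f]T_n^{(j)}$ for $j\le m-1$. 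This part is correct and is essentially the entire content of the paper's appendix proof; the paper does not attempt anything finer for $j=m$, and in its applications (e.g., in the proof of Theorem~\ref{thm:matrixChebyConvergesTofA}) it only uses derivatives up to order $m-1$.

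The genuine gap is in your treatment of $j=m$. First, the interior estimate $\abs{T_n^{(m)}(x)}\le C(x)n^{m}$ combined with $\abs{\alpha_n[f]}=O(n^{-2m})$ gives the majorant $\sum_n n^{-m}$, which diverges for $m=1$, so even the locally uniform convergence on compact subsets of $(-1,1)$ is not established in that case (and for $m=1$ the pointwise identity can in any case fail at interior jump points of $f'$). Second, and more seriously, the endpoint step cannot work as described: when the decay $n^{-2m}$ is sharp, the terms $\alpha_n[f]\,T_n^{(m)}(\pm 1)$ are of size $O(1)$ and need not tend to zero, so the series $\sum_n'\alpha_n[f]T_n^{(m)}(\pm 1)$ simply diverges, and no Abel-type or continuity argument can recover the asserted equality, since the equality presupposes convergence of the series. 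A concrete instance with $m=1$ is $f(x)=\abs{x}$: its nonzero coefficients satisfy $\abs{\alpha_{2j}[f]}\asymp j^{-2}$ while $T_{2j}'(1)=(2j)^2$, so the terms at $x=1$ stay bounded away from zero. The safe conclusion is to either restrict part~2 to $j\le m-1$ (which is all that the paper's own proof establishes and all that it later uses), or to state the $j=m$ case only on compact subsets of $(-1,1)$ and for $m\ge 2$, where your interior argument is sound.
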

The proof of Lemma~\ref{lemma:Chebyshev_expansion_prop_from_fourier} is given for completeness in Appendix \ref{sec:appendix_background_lemma}.


\section{Matrix Chebyshev expansion} \label{sec:Mat_Cheb_exp}

This section presents the theoretical results of the paper; we define a matrix function via its Chebyshev expansion, discuss its convergence, and derive its convergence rate. To this end, unless otherwise stated, we assume a given matrix norm $\norm{\cdot}$ that satisfies the sub-multiplicative property $\norm{XY} \le \norm{X}\norm{Y}$. Equipped with $\norm{\cdot}$, we restrict the convergence analysis to absolute convergence (in norm). Namely, given a series of matrices $\{ X_n \}_{n \in \mathbb{N}} \subset \rms$, we consider the convergence of $\norm{X_n}$ as $n$ tends to infinity. Note that since all matrix norms are equivalent, absolute convergence in one norm implies absolute convergence in any other norm. For more details about the convergence of matrix series and matrix norms, we refer the reader to \cite[Chapter 5]{horn2012matrix}.  

\subsection{Definition and convergence} \label{subsec:Cheby_matrix_series}

In the sequel, we denote by $\lambda_1,\ldots,\lambda_k \in \mathbb{R} $ the eigenvalues of a given matrix $A \in \rms$ and by $\rho(A) = \max_{i=1,\ldots,k} \abs{\lambda_i}$ the spectral radius of $A$. Since we discuss Chebyshev polynomials, we assume that $\rho(A) \le 1$ and that $f$ is a scalar function defined on $[-1,1]$. Also, we denote by $\| X \|_F = \sqrt{ \operatorname{tr}(XX^T) } $ the Frobenius norm of $X$.

We begin by defining the matrix Chebyshev expansion.
\begin{definition}[Matrix Chebyshev expansion]  \label{def:matrix_func_via_Chebyshev}
Given a function $f$ on $[-1,1]$, we define the partial sum 
\[ \mathcal{S}_N (f)(A) = \sumd{N}  \alpha_n[f]T_n(A)  , \]
where $\alpha_n[f]$ is given in~\eqref{eqn:chebyshev_series_scalars}. If the sequence $\mathcal{S}_N(f)(A)$ is absolutely convergent with respect to any matrix norm $\norm{\cdot}$, we call its limit $\mathcal{S}_\infty(f)(A)$ the matrix Chebyshev expansion.
\end{definition}
Using Definition~\ref{def:matrix_func_via_Chebyshev}, we address two fundamental questions: how can one guarantee the (absolute) convergence of $\mathcal{S}_N(A)$? And, does $\mathcal{S}_\infty(f)(A) $ coincide with $f(A)$, whenever the latter is defined, using the standard definitions of matrix functions (see \cite[Chapter 1]{higham2008functions})?

A basic property of polynomials, and so also of the Chebyshev polynomials $T_n$, is that they preserve matrix similarity. Namely, $A = P^{-1}BP$ implies $T_n(A) = P^{-1}T_n(B)P$. Therefore, for a diagonalizable matrix $A$, evaluating $T_n(A)$ reduces to applying $T_n(x)$ on the eigenvalues of $A$. Thus, for such matrices, the convergence of the Chebyshev expansion of a scalar function is inherited by its matrix version. This is summarized in the following corollary. 
\begin{corollary} \label{cor:convergence_normal_func}
Let $f$ be a function on $[-1,1]$ having an absolutely convergent Chebyshev expansion. Then, for any diagonalizable matrix $A$, having $\rho(A)\leq 1$, the matrix Chebyshev expansion of $f(A)$ is convergent.
\end{corollary}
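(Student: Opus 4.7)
The plan is to reduce the matrix series to a scaled version of the scalar series by exploiting the similarity invariance of polynomials, which the text already flagged as the key property. Since $A$ is diagonalizable and has real spectrum contained in $[-1,1]$ (because $\rho(A)\le 1$), we may write $A = P^{-1}DP$ with $D = \diag(\lambda_1,\ldots,\lambda_k)$, so that $T_n(A) = P^{-1}\diag(T_n(\lambda_1),\ldots,T_n(\lambda_k))P$ for every $n$.

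Next, I would fix a convenient sub-multiplicative norm (the Frobenius norm is natural, and by equivalence of matrix norms this choice is harmless). Sub-multiplicativity gives
\[
\|T_n(A)\|_F \;\le\; \|P^{-1}\|_F\,\|P\|_F\,\|T_n(D)\|_F.
\]
Using the trigonometric definition \eqref{eqn:Cheby_polynomial_trig_def}, $|T_n(\lambda_i)| \le 1$ for each $i$, so $\|T_n(D)\|_F \le \sqrt{k}$. Hence $\|T_n(A)\|_F \le C$ uniformly in $n$, where $C = \sqrt{k}\,\|P^{-1}\|_F\|P\|_F$ depends on $A$ and $k$ but not on $n$.

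From here the conclusion is immediate: the hypothesis that the scalar Chebyshev series of $f$ converges absolutely means $\sum_n |\alpha_n[f]| < \infty$, so
\[
\sideset{}{'}\sum_{n=0}^{\infty} |\alpha_n[f]|\,\|T_n(A)\|_F \;\le\; C\,\sideset{}{'}\sum_{n=0}^{\infty} |\alpha_n[f]| \;<\; \infty,
\]
which establishes absolute convergence of $\mathcal{S}_\infty(f)(A)$ in the Frobenius norm, and, by equivalence of matrix norms, in any sub-multiplicative norm, as required by Definition~\ref{def:matrix_func_via_Chebyshev}.

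There is no real obstacle here; the only subtlety worth stating explicitly is that the passage from the scalar to the matrix setting costs a fixed constant $C$ encoding both the dimension $k$ and the conditioning $\|P^{-1}\|\|P\|$ of the diagonalizing transformation, but since $C$ is independent of $n$ it does not disturb absolute convergence. Consequently this corollary is genuinely a direct consequence of the similarity-invariance remark preceding it together with the uniform bound $|T_n|\le 1$ on $[-1,1]$.
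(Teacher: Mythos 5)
Your proposal is correct and follows essentially the same route as the paper: diagonalize $A$, use sub-multiplicativity to pull out the conditioning factor, bound $\|T_n(D)\|_F \le \sqrt{k}$ via $|T_n(\lambda_i)|\le 1$, and compare with the absolutely convergent scalar series. The only difference is cosmetic (you name the transformation $P$ instead of $Q$ and spell out the equivalence-of-norms remark, which the paper leaves implicit in its general convergence framework).
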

\begin{proof}
The diagonalizability of $A$ means that $A= Q^{-1}DQ$, where $D = \diag(\lambda_1,\ldots,\lambda_k)$. Therefore, $\norm{T_n(A)} \le \norm{Q^{-1}}\norm{T_n(D)}\norm{Q}$. Due to~\eqref{eqn:Cheby_polynomial_trig_def}, $\abs{T_n(\lambda_i)} \le 1$, $1\le i \le k$ which implies that $\norm{T_n(D)}_F \le \sqrt{k}$. Thus, we get
\[ \sumd{\infty} \norm{\alpha_n[f]  T_n(A)}_{F} \le \norm[2]{Q^{-1}}_F\norm[2]{Q}_F \sqrt{k}\sumd{\infty} \abs{\alpha_n[f]} < \infty . \]
\end{proof}

For a general matrix $A \in \rms$, we can use a similar argument but with the Jordan form $A = Z^{-1}JZ$, where $J=\diag{\left ( J_1,\ldots,J_p \right ) }$ is a diagonal block matrix with the blocks $J_1,\ldots,J_p$ on its diagonal. Since applying a polynomial on a block diagonal matrix reduces to applying it to each block separately we get that,
\begin{equation} \label{eqn:chebyshev_polynomial_on_Jordan_form}
  T_n(A) = Z^{-1} \diag{ \left( T_n(J_1),\ldots,T_n(J_p) \right) } Z , \quad n \in \mathbb{N} .
\end{equation} 
Thus, to further explore $T_n(A)$, we focus on $T_n(J_i)$, where $J_i$ is a Jordan block of size $k_i \times k_i$ corresponding to the eigenvalue $\lambda_i$,
\[
 J_i =  \left( \begin{array}{cccc}
\lambda_i & 1 &  & \\
  & \lambda_i & \ddots &  \\
  &   & \ddots & 1 \\
  &   &  & \lambda_i   \\
 \end{array} \right)_{k_i \times k_i} .
\] 
\begin{lemma} \label{lemma:polynomial_on_jordan_block}
Let $p(x)$ be a polynomial. Then,
\begin{equation} \label{eqn:JordanPolynomial}
p(J_i) =  \left( \begin{array}{cccc}
p(\lambda_i) & p^\prime(\lambda_i) & \cdots & \frac{p^{(k_i-1)}(\lambda_i) }{(k_i-1) !} \\
  &  p(\lambda_i)  & \ddots & \vdots  \\
  &   & \ddots &  p^\prime(\lambda_i) \\
  &   &  & p(\lambda_i)   \\
 \end{array} \right) . 
\end{equation}
\end{lemma}
The detailed proof of Lemma~\ref{lemma:polynomial_on_jordan_block} is omitted as it follows directly from the upper triangular entries of $(J_i)^n$, to wit $(J_i^n)_{q,j}=\binom{n}{j-q}\lambda_i^{n-j+q}$. Obviously, this result agrees with standard definitions of matrix functions, see e.g., see \cite[Definition 1.2]{higham2008functions}.

The explicit form of $T_n(J_i)$, derived from Lemma \ref{lemma:polynomial_on_jordan_block}, gives rise to the next convergence result which generalizes Corollary~\ref{cor:convergence_normal_func} to non-diagonalizable matrices. For proving this generalization we need a bound on the derivatives of the Chebyshev polynomials. In \cite[Chapter 1.5]{rivlin1974chebyshev} it is proven that for any $j \in \mathbb{N}$
\begin{equation} \label{eqn:jordan_block_form}
 \abs{T_n^{(j)}(x)} \le \abs{T_n^{(j)}(1)} = \frac{n^2(n^2-1)\cdots(n^2-(j-1)^2)}{(2j-1)\cdots 5 \cdot 3 \cdot 1} \le \frac{n^{2j}}{(2j-1)!}.
\end{equation}
Therefore we have,

\begin{theorem} \label{thm:convergence_cheby}
Let $A\in\rms$ and let $m \ge 1$ be the size of the largest Jordan block in the Jordan form of $A$. Assume that $f \in C^{2m-2}([-1,1])$ such that $f^{(2m-2)}$ is absolutely continuous and $f^{(2m-1)}$ is of bounded variation. Then, the sequence $\mathcal{S}_N(f)(A)$ is absolutely convergent.
\end{theorem}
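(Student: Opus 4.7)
The strategy is to reduce the problem to estimating the norm of each Jordan block image $T_n(J_i)$, combine this with a sharp decay estimate on the Chebyshev coefficients $\alpha_n[f]$ coming from the smoothness of $f$, and check that the two exponents in $n$ match up to yield a summable series. Concretely, writing the Jordan decomposition $A = Z^{-1}JZ$ with $J = \operatorname{diag}(J_1,\dots,J_p)$ and $k_i$ the size of $J_i$, sub-multiplicativity together with \eqref{eqn:chebyshev_polynomial_on_Jordan_form} gives
\[
\|T_n(A)\| \;\le\; \|Z^{-1}\|\,\|Z\|\,\max_{1\le i\le p}\|T_n(J_i)\|,
\]
so it suffices to control each $\|T_n(J_i)\|$ individually.

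First I would apply Lemma~\ref{lemma:polynomial_on_jordan_block} to the polynomial $T_n$, obtaining an explicit upper-triangular form for $T_n(J_i)$ whose $j$-th superdiagonal has entry $T_n^{(j)}(\lambda_i)/j!$ for $j=0,\dots,k_i-1$. Since $\rho(A)\le 1$ we have $\lambda_i\in[-1,1]$, and so the Rivlin bound \eqref{eqn:jordan_block_form} yields $|T_n^{(j)}(\lambda_i)|\le n^{2j}/(2j-1)!$. Summing over the (at most $k_i^2$) nonzero entries in the Frobenius norm gives an estimate of the form
\[
\|T_n(J_i)\|_F \;\le\; C_{k_i}\, n^{2(k_i-1)} \;\le\; C_m\, n^{2(m-1)}, \qquad n\ge 1,
\]
where the dominant term comes from $j = k_i-1\le m-1$, and $C_m$ depends only on $m$. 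Passing to the original norm via equivalence of matrix norms, $\|T_n(A)\|\lesssim n^{2(m-1)}$ with a constant depending on $Z$, $k$, and $m$.

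Next I would invoke Theorem~\ref{thm:Trefethen} (Trefethen) with its smoothness parameter set to $2m-1$: the hypotheses $f\in C^{2m-2}([-1,1])$, $f^{(2m-2)}$ absolutely continuous, and $\|f^{(2m-1)}\|_{TV}<\infty$ are exactly those required, and the conclusion is
\[
|\alpha_n[f]| \;\le\; \frac{C(f,m)\,(2m-1)}{(n-(2m-1))^{2m}} \quad \text{for all } n > 2m-1.
\]
Combining the two bounds, for $n$ sufficiently large,
\[
\|\alpha_n[f]\,T_n(A)\| \;\le\; C'\,\frac{n^{2(m-1)}}{(n-(2m-1))^{2m}} \;=\; O\!\left(\frac{1}{n^{2}}\right),
\]
which is summable. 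Hence $\sum_{n\ge 0}'\|\alpha_n[f]\,T_n(A)\|<\infty$, establishing absolute convergence of $\mathcal{S}_\infty(f)(A)$ in Frobenius norm, and therefore in every matrix norm.

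The main obstacle is really only a bookkeeping one: matching the polynomial growth $n^{2(m-1)}$ of $\|T_n(J_i)\|$ against the algebraic decay of $|\alpha_n[f]|$. The growth rate is sharp because the top superdiagonal of $T_n(J_i)$ genuinely behaves like $T_n^{(k_i-1)}(\lambda_i)/(k_i-1)!$, and this forces the smoothness requirement of $2m-2$ classical derivatives (plus bounded variation of the next one) in order to get the extra two powers of $n$ needed for summability. Any weaker smoothness assumption would leave the series only conditionally convergent (or divergent) on a maximal Jordan block, explaining why the hypothesis in the statement is tight up to constants.
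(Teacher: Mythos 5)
Your argument is correct and follows essentially the same route as the paper's proof: Jordan decomposition, the explicit form of $T_n(J_i)$ from Lemma~\ref{lemma:polynomial_on_jordan_block}, the Rivlin bound \eqref{eqn:jordan_block_form} giving $\norm{T_n(A)}\lesssim n^{2m-2}$, and Theorem~\ref{thm:Trefethen} giving $\abs{\alpha_n[f]}\lesssim n^{-2m}$, so the series is dominated by $\sum n^{-2}$; the only (immaterial) difference is your use of the Frobenius norm where the paper works with the $\ell_1$-induced norm. One caveat: your closing claim that the hypothesis is tight is overstated, since for eigenvalues strictly inside $(-1,1)$ the derivative growth is only $n^{k_i-1}$ (cf.\ Lemma~\ref{lemma:leading_derivative_order} and the $\delta$-condense relaxation in Corollary~\ref{cor:convergence_mid_segment}), but this remark is not part of the proof itself.
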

\begin{proof}
It is convenient to prove the theorem using the $\ell_1$-induced matrix norm, $\norm{X}_1 = \max_i \sum_j \abs{X_{i,j}}$ whereby we get
\begin{equation} \label{eqn:bound_mat_cheby_polynomial}
 \norm{T_n(A)}_1 \le  \norm[2]{Z^{-1}}_1 \left(\max_{i=1,\ldots,p} \norm[2]{T_n(J_i)}_1 \right) \norm[2]	{Z}_1 , \quad n \in \mathbb{N} .
\end{equation}
By \eqref{eqn:JordanPolynomial}, for any Jordan block $J_i$ of size $k_i \times k_i$
\begin{equation*}  
 \norm{T_n(J_i)}_1 = \sum_{j=1}^{k_i} \abs{  \frac{1}{(j-1)!} \frac{d^{j-1}}{dx^{j-1}} T_n(\lambda_i) } , \quad n \in \mathbb{N}.
\end{equation*}  
 Therefore,
\begin{equation} \label{eqn:bound_Jordan_mat_cheby_polynomial_2}
   \norm{T_n(J_i)}_1 \le 1+ \sum_{j=1}^{k_i-1}   \frac{1}{j!} \abs{ T^{(j)}_n(\lambda_i)} \le n^{2k_i-2} \left(1+ \sum_{j=0}^\infty 2^{-j} \right)\le 3n^{2k_i-2}  , \quad n \in \mathbb{N}.
\end{equation}
Recall that $m = \max_{i=1,\ldots,p} k_i$ and combine \eqref{eqn:bound_mat_cheby_polynomial} and \eqref{eqn:bound_Jordan_mat_cheby_polynomial_2} to get
\begin{equation} \label{eqn:TnA_norm1_bound}
 \norm{T_n(A)}_1 \le  \norm[2]{Z^{-1}}_1 \norm[2]{Z}_1 \max_{i=1,\ldots,p} 3n^{2k_i-2} \le C_A n^{2m-2} ,
\end{equation}
where $C_A = 3 \norm{Z^{-1}}_1 \norm{Z}_1 $ is a constant that depends on the Jordan form of $A$ but is independent of $n$. By Theorem \ref{thm:Trefethen}, there exists $C_f$ such that $\abs{\alpha_n[f]} \le C_f n^{-2m} $ for large enough $n$, say $n \ge n^\ast$. Therefore, we have that
\begin{equation} \label{eqn:finite_norm_series}
 \sum_{n=n^\ast}^\infty \norm{\alpha_n[f]  T_n(A)}_1 = \sum_{n=n^\ast}^\infty \abs{\alpha_n[f]}  \norm{T_n(A)}_1 \le C_f C_A \sum_{n=n^\ast}^\infty  n^{-2} < \infty ,
\end{equation}
and so $\mathcal{S}_N(f)(A)$ (see Definition~\ref{def:matrix_func_via_Chebyshev}) is absolutely convergent.
\end{proof}

We next show that $\mathcal{S}_\infty(f)(A)$ coincides with the standard definitions for $f(A)$, see e.g., \cite[Chapter 1]{higham2008functions}.
\begin{theorem} \label{thm:matrixChebyConvergesTofA}
Let $A\in\rms$ and let $f$ satisfy the conditions for an absolutely convergent matrix Chebyshev expansion. Then, $\mathcal{S}_\infty(f)(A)=f(A)$, where $f(A)$ is one of the standard definitions for lifting a scalar function to matrices.
\end{theorem}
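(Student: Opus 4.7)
The plan is to reduce to the Jordan form of $A$ and verify equality block-by-block, using Lemma~\ref{lemma:polynomial_on_jordan_block} to compute the entries for each fixed partial sum and Lemma~\ref{lemma:Chebyshev_expansion_prop_from_fourier} to identify their limits with the derivatives of $f$ that appear in the standard definition of $f(A)$.

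Write $A = Z^{-1} J Z$ with $J = \diag(J_1, \ldots, J_p)$. Because $\mathcal{S}_N(f)$ is a polynomial in $A$ of degree $N$ and polynomials respect block-diagonal conjugation, equation~\eqref{eqn:chebyshev_polynomial_on_Jordan_form} extended by linearity gives
\[
\mathcal{S}_N(f)(A) \;=\; Z^{-1}\,\diag\!\bigl(\mathcal{S}_N(f)(J_1),\ldots,\mathcal{S}_N(f)(J_p)\bigr)\, Z.
\]
Since the series converges absolutely in norm by Theorem~\ref{thm:convergence_cheby} and multiplication by the fixed matrices $Z$ and $Z^{-1}$ is continuous, it suffices to show that for each Jordan block $J_i$ of order $k_i$ and eigenvalue $\lambda_i$, the block limit $\mathcal{S}_\infty(f)(J_i)$ agrees entry-wise with the standard matrix-function value $f(J_i)$ defined by~\eqref{eqn:function_on_Jordan_Block}.

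Fix a block $J_i$. Applying Lemma~\ref{lemma:polynomial_on_jordan_block} to the polynomial $\mathcal{S}_N(f)$ shows that the $(r, r+j)$-entry of $\mathcal{S}_N(f)(J_i)$ equals
\[
\frac{1}{j!} \sideset{}{'}\sum_{n=0}^{N} \alpha_n[f]\, T_n^{(j)}(\lambda_i), \qquad 0 \le j \le k_i - 1.
\]
Since $k_i \le m$, the hypotheses imposed on $f$ in Theorem~\ref{thm:convergence_cheby} are exactly those required by Lemma~\ref{lemma:Chebyshev_expansion_prop_from_fourier}. That lemma yields both the absolute convergence of the above scalar series for every $0 \le j \le m-1$ and the termwise-differentiation identity
\[
\sideset{}{'}\sum_{n=0}^\infty \alpha_n[f]\, T_n^{(j)}(x) \;=\; f^{(j)}(x), \qquad x \in [-1,1].
\]
Evaluating at $x = \lambda_i \in [-1,1]$ (permissible because $\rho(A) \le 1$) shows that as $N \to \infty$ the $(r, r+j)$-entry of $\mathcal{S}_N(f)(J_i)$ converges to $f^{(j)}(\lambda_i)/j!$, which is precisely the $(r, r+j)$-entry of $f(J_i)$ in its Jordan-form definition. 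Reassembling the blocks and conjugating by $Z$ gives $\mathcal{S}_\infty(f)(A) = f(A)$; the equivalence of the various standard definitions of $f(A)$ recalled in Appendix~\ref{apx:matrix_function} then implies the same identity under any of them.

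I do not expect a serious obstacle here: the whole argument is a matching between an algebraic identity for fixed $N$ (Lemma~\ref{lemma:polynomial_on_jordan_block}) and an analytic limit identity for the Chebyshev series together with its derivatives (Lemma~\ref{lemma:Chebyshev_expansion_prop_from_fourier}). The only minor bookkeeping is to justify interchanging the $N \to \infty$ limit with the conjugation and block decomposition, but this is immediate from continuity of matrix multiplication on a finite-dimensional space once the norm convergence of $\mathcal{S}_N(f)(A)$ is in hand.
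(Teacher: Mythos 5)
Your proposal is correct and follows essentially the same route as the paper's own proof: reduce to the Jordan blocks via conjugation, compute the entries of the partial sums with Lemma~\ref{lemma:polynomial_on_jordan_block}, and identify their limits with $f^{(j)}(\lambda_i)/j!$ via the term-by-term differentiation guaranteed by Lemma~\ref{lemma:Chebyshev_expansion_prop_from_fourier}, matching \eqref{eqn:function_on_Jordan_Block}. The only cosmetic difference is that the paper works directly with the limit $\mathcal{S}_\infty(f)(J_i)$ entry-wise rather than passing through the partial sums, which changes nothing of substance.
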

\begin{proof}
Since all standard definitions for lifting a scalar function to matrices are equivalent, we only prove that the definition based on matrix Chebyshev expansions is equivalent to the definition based on the Jordan form, see \cite[Definition 1.2]{higham2008functions}. We use the notation of the proof of Theorem \ref{thm:convergence_cheby} and by \eqref{eqn:chebyshev_polynomial_on_Jordan_form} deduce that it is enough to show that the definitions coincide on the Jordan blocks of $A$. Note that in the special case where the matrix is diagonalizable, as in Corollary \ref{cor:convergence_normal_func}, the definitions are equivalent. This is true due to the absolute convergence of the Chebyshev expansion for scalars which implies pointwise convergence, together with the fact that the function $f$ is applied element-wise to the diagonal form of $A$.

Denote by $m$ the size of the largest Jordan block of $A$. An element in the matrix $\mathcal{S}_\infty(f)(J_i)$, corresponding to the Jordan block $J_i$ with eigenvalue $\lambda_i$, is given for every $1 \le p \le j \le m$ by
\begin{equation} \label{eqn:Jordan_block_action1}
 \left( \mathcal{S}_\infty(f)(J_i) \right)_{p,j} = \left( \sumd{\infty}\alpha_n[f]T_n(J_i) \right)_{p,j} =  \sumd{\infty}\alpha_n[f]  \left( T_n(J_i) \right)_{p,j}.
\end{equation}
Note that the absolute convergence of the matrix Chebyshev expansion implies the absolute and uniform convergence of the scalar Chebyshev expansion of $f$. Thus, one can differentiate the Chebyshev expansion of $f$ term-by-term to get $f^{(j)}$, $j=0,\ldots,m-1$ (see Lemma \ref{lemma:Chebyshev_expansion_prop_from_fourier}). Since Lemma \ref{lemma:polynomial_on_jordan_block} states that
\[ \left( T_n(J) \right)_{p,j} = \frac{1}{(j-p)!}  \frac{d^{j-p}}{dx^{j-p}}T_n(\lambda_i) , \]
we get that for $1 \le p \le j \le k_i \le m$
\[ \left( \mathcal{S}_\infty(f)(J_i) \right)_{p,j} = \frac{1}{(j-p)!} \sumd{\infty}  \alpha_n[f]   \frac{d^{j-p}}{dx^{j-p}}T_n(\lambda_i) =  \frac{1}{(j-p)!} f^{(j-p)}(\lambda_i) , \]
which coincides, as required, with the definition according to the Jordan form (see \cite[Definition 1.2]{higham2008functions} and the polynomial version in~\eqref{eqn:JordanPolynomial}).
\end{proof}

\subsection{Relaxing the smoothness requirements} \label{subsec:tighten}

The proof of Theorem \ref{thm:convergence_cheby} relies upon the bound \eqref{eqn:jordan_block_form} that can be improved for cases where the eigenvalues of $A$ lie strictly inside the interval $[-1,1]$. We start by examining the first derivative of $T_n(x)$, which by \eqref{eqn:Cheby_polynomial_trig_def} is $T^\prime_n(x) = n\frac{\sin(n\theta)}{\sin(\theta)}$ for $x = \cos(\theta)$. Therefore, by \cite{lepson1974upper}
\begin{equation} \label{eqn:first_derivative_bound}
 \abs{T^\prime_n(x)} \le \frac{n}{\sqrt{1-x^2}} , \quad \abs{x}<1 , \quad n=0,1,2,\ldots .
\end{equation}
The factor $ \frac{1}{\sqrt{1-x^2}}$ increases near the end points of $[-1,1]$ and ultimately forces us to use the global bound of $n^2$, see e.g., \cite[Chapter 1]{rivlin1974chebyshev}. However, for a segment $I_\delta = [-1+\delta,1-\delta]$, with a fixed $0<\delta<1$, we have
\begin{equation} \label{eqn:gamma_constant}
\abs{T^\prime_n(x)} \le \nu n, \quad \nu = \frac{1}{\sqrt{2\delta(1 -\delta)}} .
\end{equation}
 
For higher derivatives, one can use \eqref{eqn:ODE} to derive \cite[Chapter 1, p. 32--33]{rivlin1974chebyshev}
\[ (1-x^2) T_n^{(k+1)}(x) -  (2k-1)xT_n^{(k)}(x) + (n^2-(k-1)^2)T_n^{(k-1)}(x) = 0  , \quad 1 \le k \le n . \]
Thus, we get
\begin{equation} \label{eqn:high_order_derivatives}
\abs{T_n^{(k+1)}(x)} \le  \frac{(2k-1)\abs{x}}{(1-x^2)}T_n^{(k)}(x)  + \frac{n^2}{1-x^2} \abs{T_n^{(k-1)}(x)} , \quad  \abs{x}<1 , \quad 1 \le k \le n .
\end{equation}
The latter leads to the following bound on higher order derivatives of $T_{n}(x)$.
\begin{lemma} \label{lemma:leading_derivative_order}
Let $0<\delta<1$ and $1 \le k \le n$. Then, the $k-$th derivative of $T_{n}(x)$ satisfies
\[ \abs{T_n^{(k)}(x)} \le c_kn^k , \quad x \in I_\delta, \]
where $c_k = c_k(\nu,k)$ is a constant independent of $n$, and $\nu$ is given by \eqref{eqn:gamma_constant}.
\end{lemma}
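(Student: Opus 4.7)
The plan is to prove the bound by induction on $k$, using the differential recurrence \eqref{eqn:high_order_derivatives} as the key engine, and exploiting the fact that on $I_\delta$ the weight $1/(1-x^2)$ admits the uniform upper bound $\nu^2$, where $\nu$ is defined in \eqref{eqn:gamma_constant}.

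For the base cases, I would handle $k=0$ by the classical bound $|T_n(x)| \le 1$ (giving $c_0 = 1$), and $k=1$ directly from \eqref{eqn:first_derivative_bound} together with the fact that $1/\sqrt{1-x^2} \le \nu$ on $I_\delta$, which yields $|T_n'(x)| \le \nu n$ and so $c_1 = \nu$. For the inductive step, assuming $|T_n^{(k-1)}(x)| \le c_{k-1} n^{k-1}$ and $|T_n^{(k)}(x)| \le c_k n^k$ uniformly on $I_\delta$, I would substitute into the recurrence \eqref{eqn:high_order_derivatives} rewritten as
\[
|T_n^{(k+1)}(x)| \;\le\; \frac{(2k-1)|x|}{1-x^2}\,|T_n^{(k)}(x)| \;+\; \frac{n^2}{1-x^2}\,|T_n^{(k-1)}(x)|.
\]
Using $|x|/(1-x^2) \le 1/(1-x^2) \le \nu^2$ on $I_\delta$ and the inductive hypothesis gives
\[
|T_n^{(k+1)}(x)| \;\le\; (2k-1)\,\nu^2 c_k\, n^k \;+\; \nu^2 c_{k-1}\, n^{k+1}
\;\le\; \bigl((2k-1)\nu^2 c_k + \nu^2 c_{k-1}\bigr)\, n^{k+1},
\]
where in the last step I used $n \ge 1$ to absorb $n^k$ into $n^{k+1}$. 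This yields a closed two-term recurrence $c_{k+1} = \nu^2\bigl((2k-1)c_k + c_{k-1}\bigr)$, which is manifestly independent of $n$ and depends only on $\nu$ and $k$, as required.

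There is no serious obstacle here; the argument is a mechanical induction once one notices that the singular factor $1/(1-x^2)$ in \eqref{eqn:high_order_derivatives} can be controlled by the constant $\nu^2$ away from the endpoints. The only point demanding a little care is the bookkeeping of the constants in the recurrence and the verification that the hypothesis $1 \le k \le n$ is preserved at each step (in particular that $k-1 \ge 0$ so the base case $|T_n| \le 1$ covers the initial rung of the induction).
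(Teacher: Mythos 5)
Your proof is correct and follows essentially the same route as the paper: induction on the derivative order using the recurrence \eqref{eqn:high_order_derivatives}, with the factor $1/(1-x^2)$ controlled by $\nu^2$ away from the endpoints and the base cases handled by $\abs{T_n}\le 1$ and \eqref{eqn:first_derivative_bound}. The only difference is cosmetic: you track an explicit closed recurrence $c_{k+1}=\nu^2\bigl((2k-1)c_k+c_{k-1}\bigr)$ for the constants, whereas the paper only keeps the leading-order term and defers the lower-order bookkeeping to Remark~\ref{rmk:LOT} and Appendix~\ref{apx:second_LOT}.
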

\begin{proof}
We prove the lemma by induction on the order $k$ of the derivative. For the basis of our induction we have $\abs{T_n^{(0)}(x)} \le 1 = (\nu n)^0$. For $k=1$ the lemma holds due to \eqref{eqn:first_derivative_bound} and the fact that $x \in I_\delta $. Now, assume the claim is true for any $j$ that satisfies $0 \le j \le k <n $, for a fixed $k$. Then, for $k+1$ we get by~\eqref{eqn:high_order_derivatives} a leading order term $n^{k+1}$ from the leading order of $\abs{T_n^{(k-1)}(x)}$. Namely, $\abs{T_n^{(k+1)}(x)}  \le  n^2 \nu^2 (\nu n )^{k-1} + cn^k$. It is understood from \eqref{eqn:high_order_derivatives} that the constant $c$ of the lower order term (LOT) depends on $k$ and $\nu$ but not on $n$.
\end{proof}

\begin{remark} \label{rmk:LOT}
We did not elaborate on the LOTs of the bound on $T_n^{(k+1)}(x)$, which appear in the proof as $cn^{k}$. For example, with further induction, one can derive that the constant in front of the $n^{k}$ term is bounded by $\frac{(k+1)k}{2} \nu^{k+2}$. The proof is given for completeness in Appendix~\ref{apx:second_LOT}. Nevertheless, the explicit forms of the constants of the LOTs are less important for us, as we aim to pose convergence results, which are typically stated for large $n$. Thus, the explicit forms of the LOTs are omitted.
\end{remark}

Lemma~\ref{lemma:leading_derivative_order} shows that in many cases the bound of $n^2$ for $T^\prime_n(x)$ is very pessimistic and so are the conditions of Theorem~\ref{thm:convergence_cheby} above. We therefore define a class of matrices for which we can relax the smoothness requirements imposed on the function. 
\begin{definition}[$\delta$-condensed matrix]
 Let $0<\delta<1$. We call a matrix $A\in \rms$ with $\rho(A)\le 1$ a {\it $\delta$-condensed matrix} if any eigenvalue in $[-1,-\delta]\cap[\delta,1]$ is semisimple (its algebraic multiplicity is equal to its geometric multiplicity).
\end{definition}

Having the above definition, we get the next convergence result.
\begin{corollary} \label{cor:convergence_mid_segment}
Let $A\in\rms$ be a $\delta$-condensed matrix. Denote by $m \ge 1$ the size of the largest Jordan block in the Jordan form of $A$, and assume that $f \in C^{m-1}([-1,1])$ such that $f^{(m-1)}$ is absolutely continuous and $f^{(m)}$ of bounded variation. Then, the sequence $\mathcal{S}_N(f)(A)$ is absolutely convergent.
\end{corollary}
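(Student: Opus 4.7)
The plan is to mimic the proof of Theorem~\ref{thm:convergence_cheby}, but replace the universal bound $|T_n^{(j)}(\lambda_i)|\le n^{2j}/(2j-1)!$ by the sharper bound of Lemma~\ref{lemma:leading_derivative_order}, which we are entitled to do exactly on the Jordan blocks whose eigenvalues lie inside $I_\delta$. The $\delta$-condense hypothesis is precisely what ensures that every block of size strictly greater than one has its eigenvalue in $I_\delta$; blocks of size one contribute no derivatives at all and can be handled trivially. Thus the key qualitative improvement is that the growth of $\|T_n(A)\|_1$ in $n$ is halved, from $n^{2m-2}$ to $n^{m-1}$, and this is what allows us to halve the smoothness requirement on $f$.

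Concretely, I would first write $A=Z^{-1}JZ$ with $J=\diag(J_1,\dots,J_p)$ and, as in~\eqref{eqn:bound_mat_cheby_polynomial}, reduce the estimate of $\|T_n(A)\|_1$ to a maximum of $\|T_n(J_i)\|_1$ over the Jordan blocks $J_i$ of sizes $k_i$. Partition the indices $i$ into those with $k_i=1$ (semisimple eigenvalues, possibly outside $I_\delta$, but then $\|T_n(J_i)\|_1=|T_n(\lambda_i)|\le 1$) and those with $k_i\ge 2$. By the $\delta$-condense hypothesis, each $\lambda_i$ in the second group lies in $I_\delta$, so Lemma~\ref{lemma:leading_derivative_order} applies and gives $|T_n^{(j)}(\lambda_i)|\le c_j n^j$ for $0\le j\le k_i-1$ and $n\ge k_i-1$. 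Using the explicit form from Lemma~\ref{lemma:polynomial_on_jordan_block} as in~\eqref{eqn:bound_Jordan_mat_cheby_polynomial}, summing over $j$ and absorbing the $\frac{1}{j!}$ factors yields, for $n$ large enough,
\[
\|T_n(J_i)\|_1 \;\le\; 1+\sum_{j=1}^{k_i-1}\frac{c_j}{j!}\,n^{j} \;\le\; \tilde C_i\, n^{k_i-1},
\]
and hence $\|T_n(A)\|_1 \le C_A\, n^{m-1}$ for some $C_A$ depending on $A$, $m$ and $\delta$ but not on $n$.

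With this growth bound in hand, I would apply Theorem~\ref{thm:Trefethen} to the present smoothness assumption ($f\in C^{m-1}$, $f^{(m-1)}$ absolutely continuous, $f^{(m)}$ of bounded variation): it supplies a constant $C_f$ with $|\alpha_n[f]|\le C_f/(n-m)^{m+1}$ for $n>m$. Combining the two estimates as in~\eqref{eqn:finite_norm_series},
\[
\sum_{n=n^\ast}^{\infty} \|\alpha_n[f]\,T_n(A)\|_1 \;\le\; C_f\,C_A \sum_{n=n^\ast}^{\infty} \frac{n^{m-1}}{(n-m)^{m+1}} \;<\; \infty,
\]
which is exactly the absolute convergence claimed.

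The only real obstacle is bookkeeping. One has to be slightly careful that Lemma~\ref{lemma:leading_derivative_order} requires $k\le n$, but since $k\le k_i-1\le m-1$ is uniformly bounded while $n\to\infty$, this restriction is vacuous for all but finitely many initial terms, which do not affect absolute convergence. Apart from this, the argument is a clean parallel to the proof of Theorem~\ref{thm:convergence_cheby}, with the gain by a factor of $n^{m-1}$ coming entirely from the $\delta$-condense structure.
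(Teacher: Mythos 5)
Your proposal is correct and follows essentially the same route as the paper: replace the global bound $n^{2j}$ on $T_n^{(j)}$ by the bound $c_j n^j$ of Lemma~\ref{lemma:leading_derivative_order} on the non-semisimple blocks (valid by the $\delta$-condense hypothesis), obtain $\norm{T_n(A)}_1 \le C_A n^{m-1}$, and combine with Theorem~\ref{thm:Trefethen} as in \eqref{eqn:finite_norm_series}. Your explicit handling of the size-one blocks and of the restriction $k\le n$ in the lemma is just a more careful spelling-out of what the paper leaves implicit.
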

\begin{proof}
We follow the proof of Theorem \ref{thm:convergence_cheby} and modify it as described next. First, using Lemma~\ref{lemma:leading_derivative_order} we replace \eqref{eqn:bound_Jordan_mat_cheby_polynomial_2} with
\[
  \norm{T_n(J_i)}_1 \le  1+ \sum_{j=1}^{k_i-1}   \frac{1}{(j)!} \abs{ T^{(j)}_n(\lambda_i)} \le  1+ \sum_{j=1}^{k_i-1}   \frac{1}{(j)!}c_jn^j \le \widetilde{c_{k_i}}n^{k_i-1} ,
\]
for a constant $\widetilde{c_{k_i}}$ that depends on $k_i$ and $\nu$ but is independent of $n$. Since by definition $m = \max_{i=1,\ldots, p} k_i$, we can bound 
\[ \norm{T_n(A)}_1 \le Kn^{m-1} , \quad K = \max_{ i=1,\ldots, p} \widetilde{c_{k_i}} \norm[2]{Z^{-1}}_1 \norm[2]{Z}_1 . \]
Here $K$ is a constant that depends only on $\nu$ and on the Jordan form of $A$. By Theorem \ref{thm:Trefethen}, the assumptions on $f$ ensure that \eqref{eqn:finite_norm_series} still holds.
\end{proof}

\begin{remark} \label{rmk:updateLemma}
Using the same arguments as above, we can also relax the conditions of Lemma~\ref{lemma:Chebyshev_expansion_prop_from_fourier} to $f \in C^{m-1}([-1,1])$ such that $f^{(m-1)}$ is absolutely continuous and $f^{(m)}$ is of bounded variation, for cases where $x\in [1+\delta,1-\delta]$ with a positive $\delta<1$.
\end{remark}

\subsection{The truncated matrix Chebyshev expansion} \label{subsec:truncatedMatrixCheby}

The convergence rates of the Chebyshev expansion for scalar functions are well-studied (see Section~\ref{sec:background}), and we wish to extend some of the results to the matrix case. In particular, we generalize Theorem~\ref{thm:Trefethen} to the case of matrix functions.

In the case of a diagonalizable matrix $A = Q^{-1}DQ$, we bound the convergence rate of $\mathcal{S}_N(f)(A)$ based on the convergence rate of the scalar function $f$ as
\[ \norm{\mathcal{S}_N(f)(A)-f(A)}  \le \norm[1]{Q^{-1}} \norm[1]{ \mathcal{S}_N(f)(D) - f(D)}\norm[1]{Q} , \]
where the Chebyshev expansion is applied to the diagonal matrix $D$ element-wise. Next, we proceed to more general cases where the matrices are allowed to be non-diagonalizable. In the following result we show how the smoothness of $f$ is related to the convergence rate of its expansion. Specifically, if $f$ is differentiable $\ell$ times more than required by Theorem~\ref{thm:convergence_cheby}, then the difference between $\mathcal{S}_N(f)(A)$ and $f(A)$ converges to zero as a power of $\ell-1$.
\begin{theorem} \label{thm:firstAppRes}
Let $A\in\rms$ and let $m \ge 1$ be the size of the largest Jordan block in the Jordan form of $A$. Assume $f \in C^{2m-2+\ell}([-1,1])$ such that $f^{(2m-2+\ell)}$ is absolutely continuous and $f^{(2m-1+\ell)}$ is of bounded variation. Then, for a given matrix norm $\norm{\cdot}$ we have
\[ \norm{\mathcal{S}_N(f)(A) - f(A)} \le C \frac{1}{(N-(2m-1+\ell))^{\ell+1}} , \quad N>2m-1+\ell , \]
where the constant $C = C(f,\ell,m,\norm{\cdot})$ is independent of $N$.
\end{theorem}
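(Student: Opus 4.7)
The plan is to represent the error as the tail of the Chebyshev series and then combine two bounds: the decay rate of the scalar coefficients $\alpha_n[f]$ (from Theorem~\ref{thm:Trefethen}) and the polynomial growth bound on $\|T_n(A)\|$ already established inside the proof of Theorem~\ref{thm:convergence_cheby}. Because the present hypotheses strengthen those of Theorem~\ref{thm:convergence_cheby}, Theorem~\ref{thm:matrixChebyConvergesTofA} applies and one has the absolutely convergent representation
\[ f(A) - \mathcal{S}_N(f)(A) = \sum_{n=N+1}^\infty \alpha_n[f]\, T_n(A), \]
whose norm I would bound by the triangle inequality.

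For the coefficient decay, I would invoke Theorem~\ref{thm:Trefethen} with its smoothness parameter set to $M := 2m-1+\ell$. The assumptions $f\in C^{2m-2+\ell}$, $f^{(2m-2+\ell)}$ absolutely continuous, $f^{(2m-1+\ell)}$ of bounded variation are exactly what that theorem demands for this choice, producing
\[ |\alpha_n[f]| \le \frac{C_f}{(n-M)^{M+1}} = \frac{C_f}{(n-(2m-1+\ell))^{2m+\ell}}, \quad n > M, \]
with $C_f$ depending on $f$, $\ell$, and $m$. For the polynomial factor I would reuse inequality \eqref{eqn:TnA_norm1_bound}, $\|T_n(A)\|_1 \le C_A n^{2m-2}$, and appeal to equivalence of norms on $\rms$ to transfer the bound to the chosen sub-multiplicative $\|\cdot\|$ at the cost of a constant depending on $\|\cdot\|$ (and implicitly on $k$, which is absorbed into the norm).

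Multiplying the two estimates, the problem reduces to controlling
\[ \sum_{n=N+1}^\infty \frac{n^{2m-2}}{(n-s)^{2m+\ell}}, \qquad s := 2m-1+\ell. \]
For $N\ge 2s$ one has $n\le 2(n-s)$, hence $n^{2m-2}\le 2^{2m-2}(n-s)^{2m-2}$, so each summand is at most $2^{2m-2}(n-s)^{-(\ell+2)}$. An integral comparison yields
\[ \sum_{n=N+1}^\infty (n-s)^{-(\ell+2)} \le \int_N^\infty (x-s)^{-(\ell+2)}\,dx = \frac{1}{(\ell+1)(N-s)^{\ell+1}}, \]
giving the claimed rate on this range, while the range $M < N < 2s$ is handled by enlarging the constant. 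The proof is essentially an exercise in bookkeeping; the main obstacle is aligning the smoothness index with Trefethen's parameter so that the exponents in the tail estimate combine to give exactly $\ell+2$, and tracking how the norm-equivalence constant enters the final $C=C(f,\ell,m,\|\cdot\|)$.
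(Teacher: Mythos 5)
Your proposal is correct and follows essentially the same route as the paper: write the error as the tail $\sum_{n>N}\alpha_n[f]T_n(A)$, bound $|\alpha_n[f]|$ via Theorem~\ref{thm:Trefethen} with smoothness parameter $2m-1+\ell$, bound $\norm{T_n(A)}$ by \eqref{eqn:TnA_norm1_bound} plus norm equivalence, and finish with an integral comparison. The only (harmless) difference is bookkeeping: the paper uses the product form of Trefethen's bound and the uniform estimate $\frac{n}{n-(2m-3)}\le 1+\frac{2m-3}{\ell+2}$ valid for all $N>2m-1+\ell$, whereas you use the cruder $(n-s)^{-(2m+\ell)}$ bound with the split $N\ge 2s$ versus finitely many small $N$ absorbed into the constant.
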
 
\begin{proof}
Since by Theorem~\ref{thm:convergence_cheby} and Theorem~\ref{thm:matrixChebyConvergesTofA} $\mathcal{S}_N(f)(A)$ converges in norm to $f(A)$ we have 
\[ \norm{\mathcal{S}_N(f)(A) - f(A)}  =  \norm{ \sum_{n=N+1}^\infty \alpha_n[f] T_n(A) }  ,  \]
which can be bounded by $\sum_{n=N+1}^\infty \abs{\alpha_n[f]}\norm{T_n(A)}$. By the equivalence of norms we have that $\norm{\cdot} \le C_{\text{norm}} \norm{\cdot}_1$ for some constant $C_{\text{norm}}$, and so by \eqref{eqn:TnA_norm1_bound} 
\begin{equation} \label{eqn:AnotherTnABound}
 \norm{T_n(A)} \le C_{\text{norm}}C_An^{2m-2} .
\end{equation}
The assumptions on $f$ and Theorem~\ref{thm:Trefethen} imply that
\begin{equation} \label{eqn:BndOnAlphan}
 \abs{\alpha_n[f]} \le \frac{2V}{\pi} \frac{1}{n(n-1)\cdots(n-(2m-1+\ell))} ,\quad n>2m-1+\ell , 
\end{equation}
with $V=\norm{f^{(2m-1+\ell)}}_{TV}$. Note that $\frac{n^{2m-2}}{n(n-1)\cdots(n-(2m-1+\ell))}$ can be simplified as
\[ \frac{n^{2m-3}}{(n-1)\cdots(n-(2m-3))} \frac{1}{(n-(2m-2)\cdots(n-(2m-1+\ell))}  , \]
which is bounded by
 \[  \left( \frac{n}{n-(2m-3)} \right)^{2m-3} \left( \frac{1}{n-(2m-1+\ell)} \right)^{\ell+2} . \]
For $n>2m-1+\ell$ we have
\[ \frac{n}{n-(2m-3)} = 1+ \frac{2m-3}{n-(2m-3)} \le 1+\frac{2m-3}{\ell+2}. \]
Therefore, we denote $C = C_{\text{norm}}C_A(\frac{2V}{\pi}) \left( 1+\frac{2m-3}{\ell+2} \right)^{2m-3}$ and combine \eqref{eqn:AnotherTnABound} and~\eqref{eqn:BndOnAlphan} to get
\begin{eqnarray*}
\sum_{n=N+1}^\infty \abs{\alpha_n[f]}\norm{T_n(A)} &\le C& \sum_{n=N+1}^\infty  \left( \frac{1}{n-(2m-1+\ell)} \right)^{\ell+2}  \\
&\le &  C \int_N^\infty \frac{dx}{(x-(2m-1+\ell))^{(\ell+2)}} \\
& = & \left( \frac{C}{\ell+2} \right) \frac{1}{(N-(2m-1+\ell))^{\ell+1}} .
\end{eqnarray*}
\end{proof}
A different approach to prove Theorem~\ref{thm:firstAppRes} is via element-wise estimation. In particular, we start by reducing the required bound to the case of a Jordan block, similar to~\eqref{eqn:chebyshev_polynomial_on_Jordan_form}, as 
\[ \mathcal{S}_N(f)(A) - f(A) = Z^{-1}\left( \sum_{n=0}^N \alpha_n[f]  \diag{ \left( T_n(J_1),\ldots,T_n(J_p) \right)} -   \diag{ \left( J_1,\ldots,J_p \right)}  \right)Z .\]
Then, the element-wise difference is measured for each block separately, and is of the form
\begin{equation} \label{eqn:error_scalar_derivs}
 f^{(j-1)}(\lambda_i) - \sum_{n=0}^N \alpha_n[f] T_n^{(j-1)}(\lambda_i) = \sum_{n=N+1}^\infty \alpha_n[f] T_n^{(j-1)}(\lambda_i) , \quad 0 \le j <m \quad , 1 \le i \le p. 
\end{equation}
Now, we can use the bound~\eqref{eqn:BndOnAlphan} and the following derivation in the current proof to establish a bound for~\eqref{eqn:error_scalar_derivs}. Finally, one can cast the resulting error in terms of any desired matrix norm. 

A different adjustment to the proof of Theorem~\ref{thm:firstAppRes} is for the case of $\delta$-condensed matrices. Specifically,~\eqref{eqn:AnotherTnABound} becomes $\norm{T_n(A)} \le C_{\text{norm}}C_A c_{m-1} n^{m-1}$. Therefore, the requirements from $f$ can be weakened and the remaining calculations of the proof hold with $m-1$ replacing $2m-2$. We summarize this in the following corollary.
\begin{corollary} \label{cor:firstConvergenceCondenceMat}
Let $A\in\rms$ be a $\delta$-condensed matrix and let $m>1$ be the size of the largest Jordan block in its Jordan form. Assume $f \in C^{m-1+\ell}([-1,1])$ such that $f^{(m-1+\ell)}$ is absolutely continuous and $f^{(m+\ell)}$ is of bounded variation. Then, for a given matrix norm we have
\[ \norm{\mathcal{S}_N(f)(A) - f(A)} \le C \frac{1}{(N-(m+\ell))^{\ell+1}} , \quad N>m+\ell , \]
where the constant $C = C(f,\ell,m,\norm{\cdot})$ is independent of $N$.
\end{corollary}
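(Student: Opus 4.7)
The plan is to follow the proof of Theorem~\ref{thm:firstAppRes} almost verbatim, substituting the sharper bound on $\norm{T_n(A)}_1$ that is available for $\delta$-condense matrices. The smoothness hypothesis on $f$ here is strictly stronger than that of Corollary~\ref{cor:convergence_mid_segment}, so $\mathcal{S}_\infty(f)(A)=f(A)$ with absolute convergence, and therefore
\[ \norm{\mathcal{S}_N(f)(A) - f(A)} \le \sum_{n=N+1}^{\infty} \abs{\alpha_n[f]}\,\norm{T_n(A)}. \]
By equivalence of matrix norms it suffices to estimate the series in $\norm{\cdot}_1$ and absorb the equivalence constant $C_{\mathrm{norm}}$ into the final constant $C$.

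First, I would import (without redoing the argument) the bound $\norm{T_n(A)}_1 \le K n^{m-1}$ established inside the proof of Corollary~\ref{cor:convergence_mid_segment}, where $K$ depends only on $\nu=\nu(\delta)$ and on the Jordan form of $A$. Second, I would apply Theorem~\ref{thm:Trefethen} with its parameter ``$m$'' replaced by $m+\ell$, which is legitimate since $f\in C^{m-1+\ell}$, $f^{(m-1+\ell)}$ is absolutely continuous, and $f^{(m+\ell)}$ has bounded variation; this gives
\[ \abs{\alpha_n[f]} \le \frac{2\,\norm{f^{(m+\ell)}}_{TV}}{\pi}\cdot\frac{1}{n(n-1)\cdots(n-(m+\ell))}, \quad n>m+\ell. \]

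Multiplying the two estimates yields a summand proportional to $n^{m-1}/\bigl(n(n-1)\cdots(n-(m+\ell))\bigr)$, a rational function with $m+\ell+1$ factors in the denominator. Exactly as in the proof of Theorem~\ref{thm:firstAppRes}, I would split it as
\[ \left(\frac{n}{n-(m-2)}\right)^{m-2}\cdot\frac{1}{(n-(m-1))(n-m)\cdots(n-(m+\ell))}, \]
bound the first factor uniformly for $n>m+\ell$ by a constant depending only on $m$ and $\ell$ (using $n-(m-2)\ge \ell+3$), and bound the second factor (which has $\ell+2$ remaining factors) by $(n-(m+\ell))^{-(\ell+2)}$. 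Comparison of the resulting tail series with the integral $\int_N^\infty (x-(m+\ell))^{-(\ell+2)}\,dx$ then produces the desired decay $(N-(m+\ell))^{-(\ell+1)}$. The only real issue is indexing bookkeeping---making sure Trefethen's bound applied at shift $m+\ell$ meshes with the $n^{m-1}$ polynomial growth so that precisely $\ell+2$ factors survive in the denominator, one of which is consumed by integration---but no genuinely new ingredient beyond the techniques of Theorem~\ref{thm:firstAppRes} is required.
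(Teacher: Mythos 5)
Your proposal is correct and is essentially the paper's own argument: the paper proves this corollary exactly by rerunning the proof of Theorem~\ref{thm:firstAppRes} with the sharpened bound $\norm{T_n(A)}_1 \le K n^{m-1}$ from the $\delta$-condense case (i.e.\ $m-1$ replacing $2m-2$ throughout), combined with Theorem~\ref{thm:Trefethen} at smoothness level $m+\ell$. Your explicit index bookkeeping (the split leaving $\ell+2$ factors, the uniform bound on $\bigl(\tfrac{n}{n-(m-2)}\bigr)^{m-2}$, and the integral comparison) matches the paper's computation, which it only states implicitly.
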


\begin{remark}
There are some intermediate cases between the two cases presented in Theorem~\ref{thm:firstAppRes} and Corollary~\ref{cor:firstConvergenceCondenceMat}. Specifically, consider a matrix $A$ that has an eigenvalue of $1$ or $-1$, which is not semisimple, but whose associated Jordan block is of size $\widetilde{m}$ with $\widetilde{m}<m$. That means that the power of $n$ in the bound~\eqref{eqn:TnA_norm1_bound} is determined by the maximum between $m-1$ and $2\widetilde{m}-2$. Then, the smoothness requirements from $f$ should be modified accordingly. The proof of such a result can be easily recovered from the current section and is therefore omitted.
\end{remark}

Theorem~\ref{thm:firstAppRes} shows a way to lift convergence rate results from scalars to matrices. However, two weaknesses of this approach are hidden in the constant $C$. First, $C$ has $C_A$ as a factor, namely it depends on the condition number of $Z$ (see \eqref{eqn:TnA_norm1_bound}). This condition number can be significantly large. Second, $C$ depends on the constants relating different matrix norms, which typically depend on the size of $A$.

\subsection{Bounds which are independent of the matrix size}
Inspired by \cite{mathias1993approximation}, we suggest an alternative approach for lifting convergence rate results from scalars to matrices, based on the duality theorem for matrix norms, see e.g., \cite[Chapter 3]{horn1991topics}. The bound we get using this approach is independent of the size of the matrix but requires additional smoothness from~$f$. This means, for example, that for a given $f$ the convergence rate for a matrix consisting of one Jordan block is the same as for a matrix consisting of many copies of this block.
\begin{theorem} \label{thm:convergence_rate}
Let $A\in\rms$ and denote by $m$ the size of the largest Jordan block in the Jordan form of~$A$. Assume $f \in C^{2m+2\ell}([-1,1])$ such that $f^{(2m+2\ell)}$ is absolutely continuous and $f^{(2m+2\ell+1)}$ is of bounded variation. Then, for a given matrix norm we have
\[ \norm{\mathcal{S}_N(f)(A) - f(A)} \le C \frac{1}{(N-\ell)^{\ell}} , \quad N>\ell , \]
where $C$ is a constant independent of $N$ and $k$. This constant can be bounded by 
\[ \frac{2}{\ell} \int_{-1}^1 \norm{ \sumd{\infty} \alpha_n[f] T_n(A) T^{(\ell+1)}_n(t)} dt . \] 
\end{theorem}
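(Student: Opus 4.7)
My plan is to reduce the matrix bound to the scalar Trefethen estimate (Theorem~\ref{thm:Trefethen}) applied to an auxiliary matrix-valued function in $t\in[-1,1]$, and to use matrix-norm duality so that the constant depends only on an integral expression rather than on similarity-transform factors such as $\norm{Z^{-1}}\norm{Z}$ that polluted Theorem~\ref{thm:firstAppRes}. Introduce
\[ G(t) := \sumd{\infty}\alpha_n[f]\,T_n(A)\,T_n(t), \qquad t\in[-1,1], \]
a matrix-valued function whose Chebyshev coefficients (in $t$) are the matrices $\alpha_n[f]T_n(A)$. Because $T_n(1)=1$, evaluating $G$ and its scalar Chebyshev partial sum $G_N(t)=\sumd{N}\alpha_n[f]T_n(A)T_n(t)$ at $t=1$ gives $G(1)=\mathcal{S}_\infty(f)(A)=f(A)$ (by Theorem~\ref{thm:matrixChebyConvergesTofA}) and $G_N(1)=\mathcal{S}_N(f)(A)$, so the target error is exactly the Chebyshev truncation error of $G$ evaluated at $t=1$.

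To transport scalar Chebyshev estimates to the matrix level without incurring $k$-dependent factors, I would use duality for matrix norms \cite[Chapter 3]{horn1991topics}: for any matrix norm $\norm{\cdot}$ there is a dual norm $\norm{\cdot}_D$ with $\norm{X}=\sup_{\norm{Y}_D\le 1}\abs{\tr(Y^*X)}$. Fix $Y$ with $\norm{Y}_D\le 1$ and define the \emph{scalar} function $\phi_Y(t):=\tr(Y^*G(t))=\sumd{\infty}\alpha_n[f]\tr(Y^*T_n(A))T_n(t)$, so that $\alpha_n[\phi_Y]=\alpha_n[f]\tr(Y^*T_n(A))$ and $\phi_Y(1)-\phi_Y^N(1)=\tr(Y^*(f(A)-\mathcal{S}_N(f)(A)))$. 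Provided $\phi_Y$ satisfies the hypotheses of Theorem~\ref{thm:Trefethen} at smoothness order $\ell$, that theorem produces
\[ \abs{\tr(Y^*(f(A)-\mathcal{S}_N(f)(A)))}\;\le\;\frac{c_\ell}{(N-\ell)^\ell}\,\norm{\phi_Y^{(\ell)}}_{TV}, \]
with $c_\ell$ an explicit numerical constant of order $1/\ell$. The pointwise inequality $\abs{\tr(Y^*X)}\le\norm{Y}_D\norm{X}$ yields $\norm{\phi_Y^{(\ell)}}_{TV}\le\norm{Y}_D\int_{-1}^{1}\norm{G^{(\ell+1)}(t)}\,dt$, and taking the supremum over $\norm{Y}_D\le 1$ produces exactly the stated bound.

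The remaining technical step is to justify termwise differentiation of $G$ up to order $\ell+1$ and to check that $\phi_Y$ indeed falls within the scope of Theorem~\ref{thm:Trefethen}. Using the a priori estimates $\norm{T_n(A)}\le C_A n^{2m-2}$ from~\eqref{eqn:TnA_norm1_bound}, $\abs{T_n^{(\ell+1)}(t)}\le n^{2(\ell+1)}/(2\ell+1)!$ from~\eqref{eqn:jordan_block_form}, and the coefficient decay $\abs{\alpha_n[f]}=\mathcal{O}(n^{-(2m+2\ell+2)})$ that follows from Theorem~\ref{thm:Trefethen} under the hypothesis $f\in C^{2m+2\ell}$ with $f^{(2m+2\ell+1)}$ of bounded variation, the formal derivative series for $G^{(\ell+1)}(t)$ is dominated termwise by a multiple of $n^{-2}$, uniformly in $t\in[-1,1]$. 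This legitimizes termwise differentiation of the matrix Chebyshev expansion up to order $\ell+1$ (a matrix analogue of Lemma~\ref{lemma:Chebyshev_expansion_prop_from_fourier}), shows that $\phi_Y^{(\ell-1)}$ is absolutely continuous with $\phi_Y^{(\ell)}$ of bounded variation (as Theorem~\ref{thm:Trefethen} requires), and simultaneously guarantees that $t\mapsto\norm{G^{(\ell+1)}(t)}$ is integrable on $[-1,1]$ so the final constant is finite.

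The main obstacle, as I see it, is the smoothness bookkeeping: carefully converting the scalar Trefethen bound for $\phi_Y$ into a matrix-norm bound whose prefactor is purely the integral $\int\norm{G^{(\ell+1)}(t)}\,dt$ times $1/\ell$, uniformly in $Y$ of unit dual norm. The crucial $k$-independence of the final constant then comes for free from the duality step, since in contrast to the proof of Theorem~\ref{thm:firstAppRes} the argument never invokes the equivalence-of-norms constant nor the Jordan-form condition number $\norm{Z^{-1}}\norm{Z}$ outside the integrand itself.
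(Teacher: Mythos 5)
Your proposal is correct and follows essentially the same route as the paper: the same auxiliary matrix-valued function $g(t)=\sumd{\infty}\alpha_n[f]T_n(A)T_n(t)$ evaluated at $t=1$, reduction to the scalar Trefethen bound via matrix-norm duality (the paper packages this as the term-by-term operator $Q_N$ and Theorem~\ref{thm:error_term_by_term}, applied through the scalar function $h(t)=\tr(f(t)B^T)$, while you apply the duality functional directly at $t=1$), and the same termwise-differentiation bookkeeping using $\norm{T_n(A)}\le C_A n^{2m-2}$, $\abs{T_n^{(j)}}\le n^{2j}$, and the coefficient decay to justify smoothness of $g$ and finiteness of $\int_{-1}^1\norm{g^{(\ell+1)}(t)}\,dt$. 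The only point the paper spells out that you assert is the interchange of the Chebyshev inner product with the infinite sum (via bounded convergence, cf.\ Lemma~\ref{lemma:tr_ip_commute}) to identify the coefficients of $g$, which is a minor omission rather than a gap.
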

The proof of Theorem \ref{thm:convergence_rate} is given in Appendix~\ref{apx:proofTruncError}, where we lift Theorem~\ref{thm:Trefethen} to matrices using an auxiliary Chebyshev operator, acting on matrix-valued functions. Similar approaches were introduced in \cite[Chapter 6]{horn1991topics} and \cite{mathias1993approximation} for other types of matrix operators. 

We conclude this section with a variant of Theorem~\ref{thm:convergence_rate} for $\delta$-condensed matrices. The proof uses arguments similar to those of Corollary~\ref{cor:convergence_mid_segment} and Corollary~\ref{cor:firstConvergenceCondenceMat} and thus is omitted.
\begin{corollary}
 Let $A\in\rms$ be a $\delta$-condensed matrix and denote by $m$ the size of the largest Jordan block in the Jordan form of $A$. Assume $f \in C^{m+\ell-1}([-1,1])$ such that $f^{(m+\ell-1)}$ is absolutely continuous and $f^{(m+\ell)}$ is of bounded variation. Then, for a given matrix norm we have
\[ \norm{\mathcal{S}_N(f)(A) - f(A)} \le C \frac{1}{(N-\ell)^{\ell}} , \quad N>\ell , \]
where $C$ is a constant independent of $N$ and $k$.
\end{corollary}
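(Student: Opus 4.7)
The plan is to mirror the proof of Theorem~\ref{thm:convergence_rate} (Appendix~\ref{apx:proofTruncError}) essentially verbatim, substituting at each step the sharper bound on $\norm{T_n(A)}_1$ that the $\delta$-condense assumption provides. In that proof, the tail $\sum_{n>N}\alpha_n[f]\,T_n(A)$ is controlled by lifting Theorem~\ref{thm:Trefethen} to matrices via a duality representation in the spirit of~\cite{mathias1993approximation}, and at the decisive step the bound $\norm{T_n(A)}_1 \le C_A n^{2m-2}$ from~\eqref{eqn:TnA_norm1_bound} determines how many derivatives of $f$ must be spent to achieve the prescribed decay. Replacing this bound by $\norm{T_n(A)}_1 \le K n^{m-1}$, which the $\delta$-condense hypothesis guarantees (exactly as in the proof of Corollary~\ref{cor:convergence_mid_segment}), reduces the smoothness required from $f$ and accounts for the gap between $C^{2m+2\ell}$ in Theorem~\ref{thm:convergence_rate} and $C^{m+\ell-1}$ here.

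First I would verify the improved Chebyshev-polynomial-on-matrix bound. Since $A$ is $\delta$-condense, every Jordan block $J_i$ of size $k_i > 1$ corresponds to an eigenvalue $\lambda_i \in I_\delta = [-1+\delta,1-\delta]$, so Lemma~\ref{lemma:leading_derivative_order} gives $\abs{T_n^{(j)}(\lambda_i)} \le c_j n^j$ for $1 \le j \le k_i - 1$; combining with the explicit formula of Lemma~\ref{lemma:polynomial_on_jordan_block} produces $\norm{T_n(J_i)}_1 \le \widetilde{c_{k_i}} n^{k_i-1}$, while semisimple eigenvalues contribute at most $\abs{T_n(\lambda_i)} \le 1$ by~\eqref{eqn:Cheby_polynomial_trig_def}. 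Summing over blocks and using $m = \max_i k_i$ yields $\norm{T_n(A)}_1 \le K n^{m-1}$. Feeding this into the duality framework of Theorem~\ref{thm:convergence_rate}, and integrating by parts against the Chebyshev weight as many times as permitted by the absolute continuity of $f^{(m+\ell-1)}$ and the bounded variation of $f^{(m+\ell)}$, one obtains summable tail contributions that integrate to the desired $C/(N-\ell)^\ell$ rate.

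The main obstacle I anticipate is preserving the dimension-free character of $C$. In Theorem~\ref{thm:convergence_rate} this independence is achieved through the representation $C \le \frac{2}{\ell}\int_{-1}^1 \norm{\sumd{\infty}\alpha_n[f]\,T_n(A)\,T^{(\ell+1)}_n(t)}\,dt$, which bypasses the potentially large factor $\norm{Z^{-1}}_1\norm{Z}_1$. The integration runs over all of $[-1,1]$, whereas the improved estimate on $T^{(\ell+1)}_n(t)$ from Lemma~\ref{lemma:leading_derivative_order} holds only on $I_\delta$. I would therefore split $[-1,1] = I_\delta \cup ([-1,-1+\delta] \cup [1-\delta,1])$ and treat the dominant $I_\delta$ contribution with the $n^{\ell+1}$ bound; on the narrow boundary strips, whose total length is $2\delta$ independent of $k$, I would exploit the extra decay of $\abs{\alpha_n[f]}$ provided by the smoothness hypothesis together with the global bound from~\eqref{eqn:jordan_block_form}, being careful to track summability. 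If the pointwise estimate near the endpoints turns out to be too crude, I would instead argue directly via the Chebyshev-weighted inner product, using that Chebyshev expansions of sufficiently smooth functions converge uniformly on all of $[-1,1]$ including the endpoints by Lemma~\ref{lemma:Chebyshev_expansion_prop_from_fourier} together with Remark~\ref{rmk:updateLemma}.
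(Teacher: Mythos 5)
Your overall plan is the one the paper itself intends: the corollary is stated without an explicit proof, as a variant of Theorem~\ref{thm:convergence_rate} to be obtained by rerunning the duality argument of Appendix~\ref{apx:proofTruncError} with the sharpened bound $\norm{T_n(A)}_1\le K n^{m-1}$ in place of \eqref{eqn:TnA_norm1_bound}, and your derivation of that sharpened bound from Lemma~\ref{lemma:polynomial_on_jordan_block} and Lemma~\ref{lemma:leading_derivative_order} is correct. The gap is in the decisive step that you flag but do not resolve. To invoke Theorem~\ref{thm:error_term_by_term} with exponent $\ell$ one must show that each entry of $g(x)=\sumd{\infty}\alpha_n[f]T_n(A)T_n(x)$ from \eqref{eq:gx} has an $\ell$-th derivative of bounded variation on \emph{all} of $[-1,1]$, the relevant constant being controlled by $\int_{-1}^1\norm{g^{(\ell+1)}(t)}dt$. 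The $\delta$-condense hypothesis sharpens only the matrix factor $\norm{T_n(A)}$ (the derivatives of $T_n$ are evaluated at the non-semisimple eigenvalues, which lie in $I_\delta$); it gives no information about the auxiliary variable $t$, which must range over the full interval, endpoints included (indeed $g$ is finally evaluated at $x=1$). Your proposed splitting does not repair this: on the endpoint strips the only available estimate is the global one \eqref{eqn:jordan_block_form}, $\abs{T_n^{(\ell+1)}(t)}=O(n^{2\ell+2})$, the strips have width fixed independently of $n$, and the stated hypotheses give at best $\abs{\alpha_n[f]}\lesssim n^{-(m+\ell+1)}$ by Theorem~\ref{thm:Trefethen}; combined with $\norm{T_n(A)}\lesssim n^{m-1}$, the $n$-th term of the strip contribution is of order $n^{\ell}$, which is not summable for any $\ell$. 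Even replacing the sup bound by the integrated bound $\int_{-1}^1\abs{T_n^{(\ell+1)}(t)}dt=O(n^{2\ell})$ leaves terms of order $n^{\ell-2}$, still divergent for $\ell\ge1$. The fallback via Remark~\ref{rmk:updateLemma} does not help either, since that remark concerns interior points, whereas the obstruction sits precisely at $t=\pm1$.

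The derivative accounting in your first paragraph is where the error enters: passing from $n^{2m-2}$ to $n^{m-1}$ saves only $m-1$ powers, so the template of Theorem~\ref{thm:convergence_rate} would reduce the requirement $f\in C^{2m+2\ell}$, $f^{(2m+2\ell+1)}$ of bounded variation to roughly $f\in C^{m+2\ell}$, $f^{(m+2\ell+1)}$ of bounded variation (or $f^{(m+2\ell)}$ of bounded variation with the refined integral estimate), which is what row 3 of Table~\ref{tab:convergenceResult} records for exactly this $k$-independent, $\delta$-condense case --- not the $f\in C^{m+\ell-1}$, $f^{(m+\ell)}$ of bounded variation assumed in the corollary as printed. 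In other words, your outline is short by roughly $\ell+1$ derivatives, and this shortfall is not a fixable bookkeeping issue within the duality route: under the stated hypotheses the verification that $g^{(\ell)}$ has bounded variation fails, so either the hypotheses must be strengthened to match the table, or a genuinely different argument (one that does not pass through Theorem~\ref{thm:error_term_by_term} applied to $g$) is required.
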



\section{Numerical examples} \label{sec:numerics}

We turn to demonstrate numerically the theory devoloped in Section~\ref{sec:Mat_Cheb_exp}. This section is divided into four parts. The first part briefly explains our implementation of matrix Chebyshev expansions. The second part demonstrates the evaluation of non-analytic matrix functions by matrix Chebyshev expansions. The third part illustrates numerically the convergence properties of matrix Chebyshev expansions for Jordan block matrices. The fourth part shows an application that uses the numerical advatages of the matrix Chebyshev expansion. In the context of numerical codes, it is worth mentioning the Chebfun software system and resources \cite{driscoll2014chebfun}, where the interested reader can further study other numerical implementations and issues related to Chebyshev expansions.

\subsection{Notes on implementation}

The common practice for approximating a function using Chebyshev expansions consists of computing the expansion coefficients $\alpha_n$ in~\eqref{eqn:chebyshev_series_scalars}, followed by computing the finite sum $\mathcal{S}_N$ of~\eqref{eqn:truncated_chebyshev}. Calculating the expansion coefficients uses the Clenshaw-Curtis quadrature for~\eqref{eqn:inner_product} and is implemented efficiently using the Fast Fourier Transform (FFT) algorithm, see e.g., \cite[Chapter 5.8]{vetterling1992numerical}.

Having obtained the expansion coefficients for $\mathcal{S}_N$ of \eqref{eqn:truncated_chebyshev}, evaluating $\mathcal{S}_N$ employs Clenshaw's algorithm (see e.g., \cite[Chapter 5.5]{vetterling1992numerical}), which exploits the three term recursion \eqref{eqn:three_term_recursion}, as described in \cite[p. 193]{vetterling1992numerical}. Note that this algorithm is valid also for matrices since each polynomial consists only of powers of $A$, which means that any two such matrix polynomials of $A$ commute. Commutativity makes scalar algorithms, such as Clenshaw's algorithm, applicable to matrices. The algorithm for evaluating matrix Chebyshev expansions is given for completeness as Algorithm~\ref{alg:clenshaw} in Appendix \ref{apx:algorithm}.

To substitute a matrix $A$ into a Chebyshev expansion, the eigenvalues of $A$ must be in $[-1,1]$. This constraint can be easily removed by estimating a norm of $A$, for example, the Frobenius norm, and scaling down the matrix. Having a scale factor, say $b$, that guarantees that the scaled-down matrix $\widetilde{A} = A/b$ satisfies $\norm{\widetilde{A}} \le 1$,  we expand $g(\widetilde{A})$ for $g(\cdot) = f(b\cdot)$. Observe that in this case, any theoretical result must be applied to the scaled function.

The entire code for the numerical examples is available online at \url{https://github.com/nirsharon/matrix-Chebyshev-expansion}. The numerical examples were executed in Matlab 2015a on a Springdale Linux desktop with a 3.2~GHz i7-Intel Core\textsuperscript{TM} CPU having $16$ GB of RAM.

\subsection{Lifting non-analytic functions}

An important motivation for using Chebyshev expansions is applying non-analytic functions to matrices. A textbook solution involving diagonalizing $A$ might be computationally infeasible for large matrices, and numerically unstable for general matrices. The same holds if $A$ cannot be diagonalized, and its Jordan decomposition is used instead. Thus, we consider the matrix Chebyshev expansion as an alternative.

In our first set of examples, we use three different functions which we apply on symmetric random matrices of size $10\times 10$. These matrices have, with high probability, only simple eigenvalues (geometric multiplicity of one) and are diagonalizable due to symmetry. We use this simple setting to demonstrate the convergence rates of Subsection~\ref{subsec:truncatedMatrixCheby}. 

The first function we use is
\begin{equation} \label{eqn:f1_numerics}
f_1(x) = \operatorname{sign}(x)x^2  = \left \{ \begin{array}{rl}
               -x^2               & x < 0 , \\
               x^2               & \text{otherwise} .
           \end{array} \right.
\end{equation}
This function is $C^1$, with jump discontinuity in the second derivative at $x=0$. Clearly, no Taylor series is available for evaluating $f_1(A)$. Since the second derivative is of bounded variation, we expect from Theorem~\ref{thm:firstAppRes} that the error of the truncated matrix Chebyshev expansion of length $N$ would decay as $N^{-2}$ (take $m=1$ and $j=1$). To numerically validate this theoretical result, we calculate matrix Chebyshev expansions with up to $2000$ coefficients. $f_1$ is an odd function and thus only half of its expansion coefficients are nonzero. The error in evaluating matrix functions is measured using the spectral norm and we normalize it by the condition number of the matrix,
\[ \norm{\mathcal{S}_N(f)(A)-f(A)}/\left( \norm[1]{A}\norm[1]{A^{-1}}  \right) . \]
The normalization compensate for the natural loss of accuracy due to the condition number of $A$ and to provide a clear view of the convergence rates which we want to test here. We present in Figure~\ref{subfig:non_smooth1} the coefficients' decay, the approximation error of the truncated matrix Chebyshev expansions, and the theoretical bound $N^{-2}$ on a logarithmic scale, as functions of the expansion length~$N$. Indeed, we see that the approximation errors fit the theoretical bound.

We repeat the last experiment with the continuous function
\begin{equation} \label{eqn:f2_numerics}
f_2(x) = \sqrt{\abs{x}} .
\end{equation}
The derivative of this function has a singularity at $x=0$, nevertheless it is of bounded variation. As such, its coefficients' decay is slower than of $f_1$ and so we expect the convergence rate to behave like $N^{-1}$. As in the previous example, we show on a logarithmic scale the coefficients' decay, the convergence rate and its theoretical bound. As seen in Figure~\ref{subfig:non_smooth_second}, the numerical convergence rate fits the expected theoretical rate.

\begin{figure}[ht]
    \centering
\subfloat[$f_1(x)$ of \eqref{eqn:f1_numerics} ]{	  \label{subfig:non_smooth1}  \includegraphics[width=0.4\textwidth]{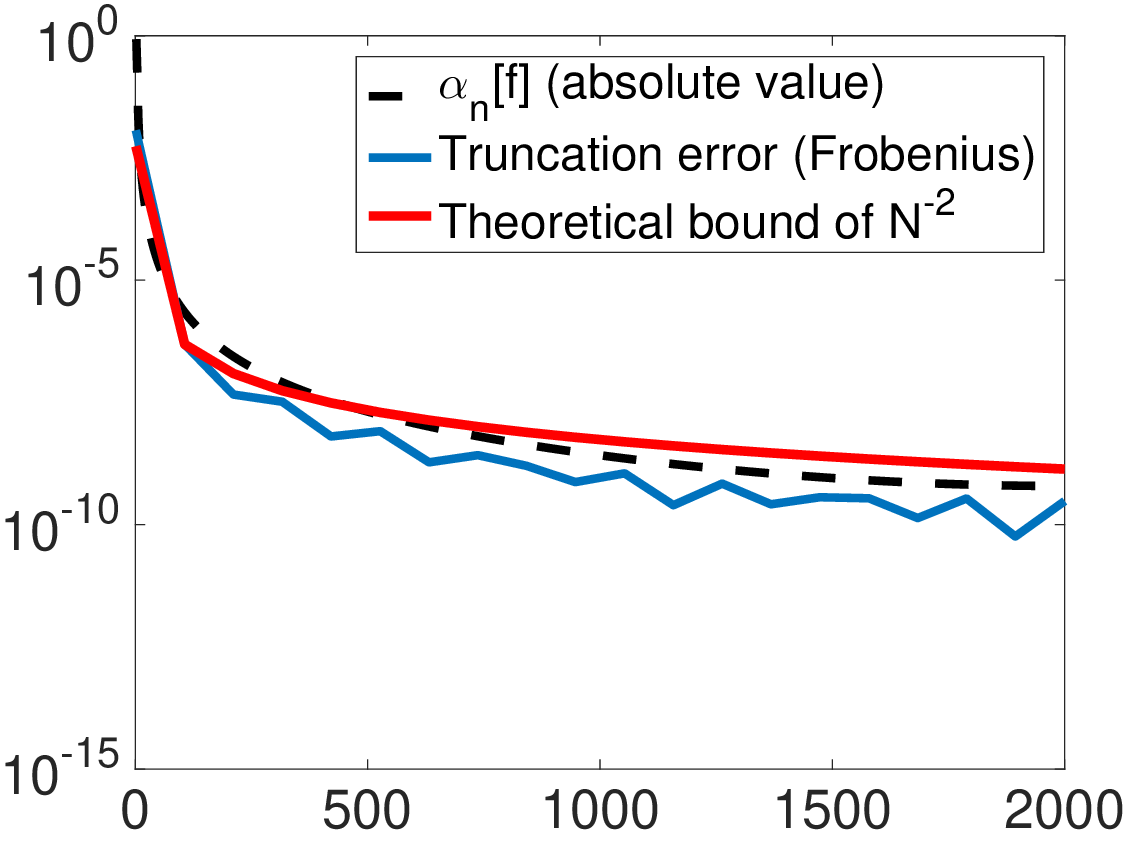}} \qquad
\subfloat[$f_2(x)$ of \eqref{eqn:f2_numerics}]{	\label{subfig:non_smooth_second}    \includegraphics[width=0.4\textwidth]{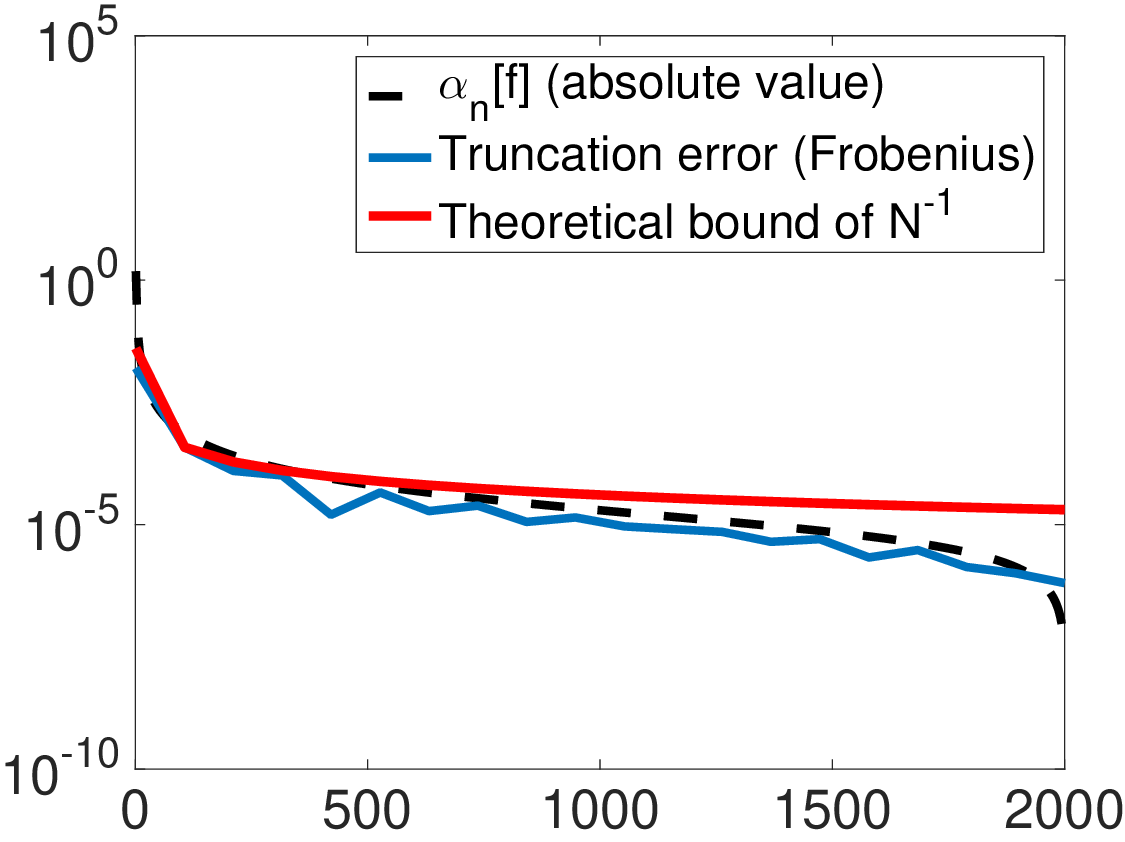}} 
\caption{Lifting scalar functions to $10 \times 10$ random matrices: comparisons of the coefficients' decay, the approximation error of the truncated matrix Chebyshev expansion, and the theoretical bound of Theorem~\ref{thm:firstAppRes}.}
            \label{fig:non_smooth_func}
\end{figure}

As a last example for this part, consider
\begin{equation} \label{eqn:f3_numerics}
f_3(x) = \frac{1}{x^2+0.25} .
\end{equation}
This function is analytic around zero, but with radius of convergence of $0.5$, due to its poles at $\pm 0.5 \dot{\imath}$. Therefore, the Taylor expansion of $f_{3}(x)$ cannot be used for matrices having eigenvalues whose magnitudes are above $0.5$. The matrix that we use is of size $10 \times 10$, with seven eigenvalues whose magnitudes are larger than $0.5$, as seen in the bar plot of Figure~\ref{subfig:f3_spec}. Since $f_3$ has derivatives of any order everywhere on the real line, the coefficients decay rapidly and we have to use only $73$ coefficients (among them only $37$ are nonzero) to get double precision accuracy. Figure~\ref{subfig:f3} shows that indeed both the expansion coefficients and the approximation error decay rapidly.

\begin{figure}[ht]
    \centering
\subfloat[The spectrum of the matrix]{\label{subfig:f3_spec}  \includegraphics[width=0.4\textwidth]{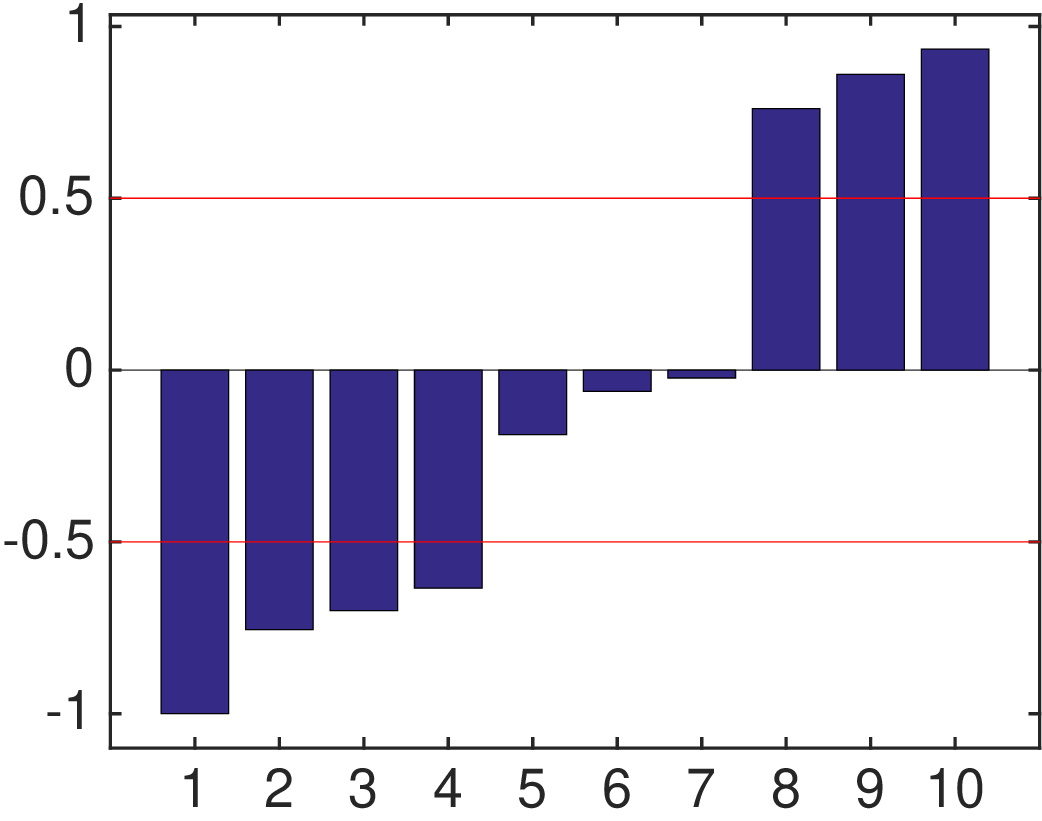}} \qquad
\subfloat[Convergence rate of the matrix Chebyshev expansion and decay rate of the Chebyshev coefficients]{\label{subfig:f3}  \includegraphics[width=0.4\textwidth]{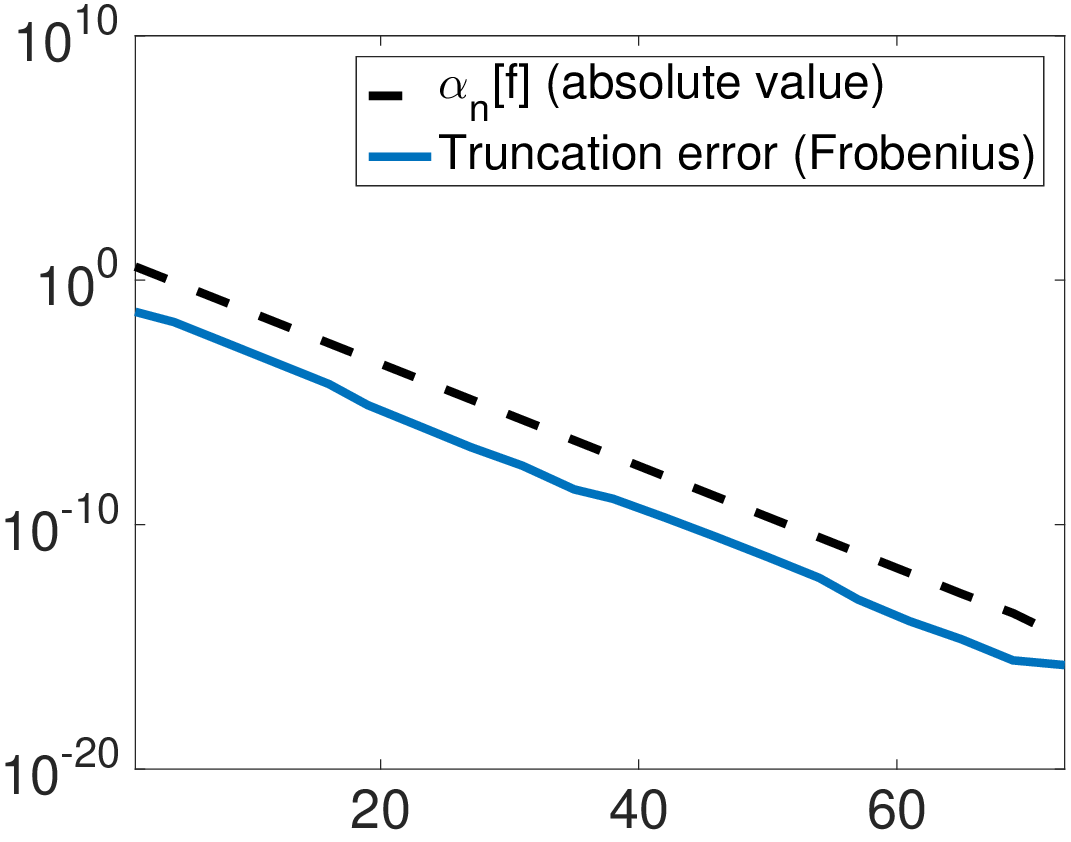}} 
 \caption{Evaluating $f_3$ of \eqref{eqn:f3_numerics} on a matrix of size $10 \times 10$.}
\end{figure}

\subsection{Matrix Chebyshev expansion on Jordan blocks}

As an example for applying matrix functions on non-diagonalizable matrices, we investigate the case of Jordan blocks. We focus on the two parameters of Jordan blocks: their eigenvalues and their size. 

We begin by considering the function
\begin{equation} \label{eqn:numeric_f4}
f_4(x) = \abs{x}^{3.5}  ,\quad x \in [-1,1] .
\end{equation}
This function has $3$ absolutely continuous derivatives, but the fourth one is not of bounded variation. Thus, for Jordan blocks with eigenvalue in $(-1,1)$, Corollary~\ref{cor:convergence_mid_segment} guarantees the convergence only for $m \le 3$. We evaluate $f_4$ for Jordan blocks associated with the eigenvalue $0.7$ and for sizes $m=2,3$ and $4$. The expansion of $f_4$ for the first two Jordan blocks is guaranteed to converge, while for the third it does not. The convergence rates of the three expansions are depicted  in Figure~\ref{subfig:Jordan1}, where the truncation error is given as a function of the expansion length. The error axis is on a logarithmic scale, and still there is a clear difference between the decay rates of the three cases. 

When considering $\delta$-condensed matrices, for $f_4$ and for Jordan blocks of size $3$ (with eigenvalue in $[-1+\delta,1-\delta]$, the convergence is guaranteed, while if the eigenvalues are $1$ or $-1$, the convergence is guaranteed only for Jordan blocks of size $2$. We plot in Figure~\ref{subfig:Jordan2} the truncation errors for three cases corresponding to the three eigenvalues $0.4$, $0.7$, and $1$. We see that the error associated with the eigenvalue $0.7$ is slightly higher than that of $0.4$, which can be explained by the growing constant \eqref{eqn:gamma_constant} and Lemma~\ref{lemma:leading_derivative_order}. Also, the error that is associated with the eigenvalue $1$ is large and its matrix Chebyshev expansion doesn't seem to converge.

\begin{figure}[ht]
    \centering
\subfloat[Three different sizes of Jordan blocks with eigenvalue $0.7$]{\label{subfig:Jordan1}  \includegraphics[width=0.4\textwidth]{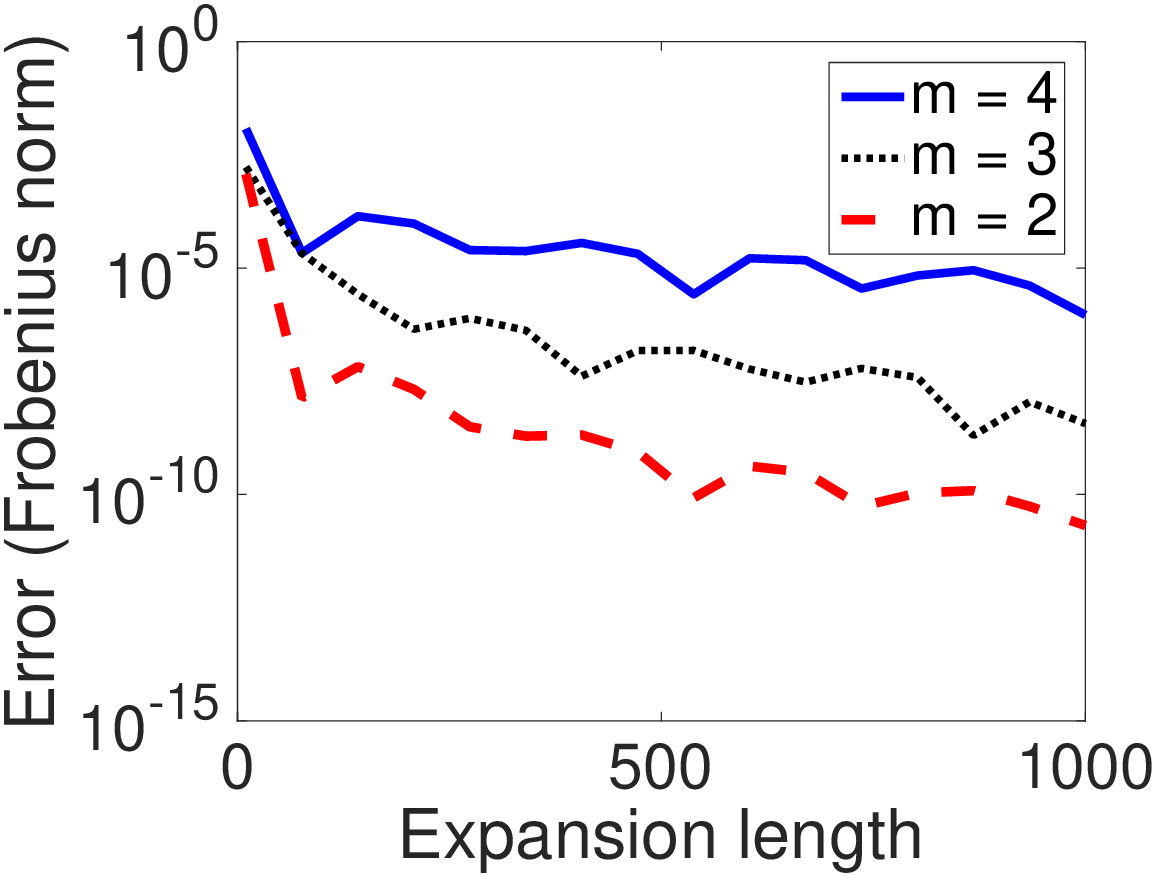}} \qquad
\subfloat[Three Jordan blocks of size $m=3$ corresponding to the eignevalues $0.4$, $0.7$, and $1$]{\label{subfig:Jordan2}  \includegraphics[width=0.41\textwidth]{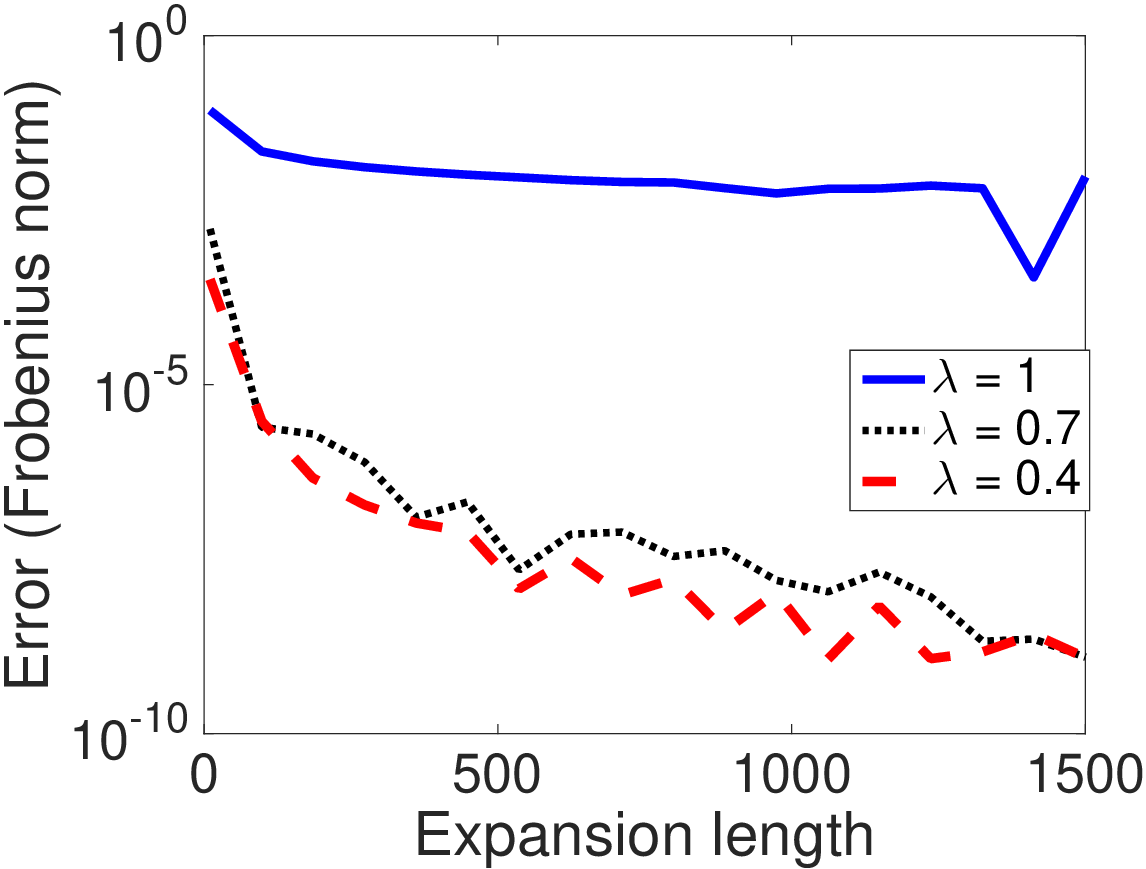}} 
 \caption{Truncation errors in evaluating $f_4$ of \eqref{eqn:numeric_f4} on Jordan blocks as a function of the expansion length.}
 \label{fig:differentJordanSize}
\end{figure}

The other parameter that affects the convergence rate is the size of the largest Jordan block. In particular, we show in Theorem~\ref{thm:convergence_rate} that while the size of the largest Jordan block affects the convergence rate, the matrix size does not. In order to test this observation numerically, we compare three block matrices of size $10 \times 10$, having blocks of varying sizes on their diagonal. These blocks have the eigenvalue $0.5$ on the diagonal and $0.5$ on super-diagonal entries (instead of the traditional $1$). This is done to avoid high condition numbers which can obscure the results of this test. The first matrix consists of a single block of size $10\times 10$, the second consists of two blocks of size $5 \times 5$ each, and the third consists of five blocks, each of size $2 \times 2$. We use the function $f_3$ of \eqref{eqn:f3_numerics} to allow for large Jordan blocks. The results are plotted in Figure \ref{fig:different_block_sizes}, where we notice that the convergence rates depend on the size of the largest Jordan block. It is worth noting that we repeated this test with a matrix having on the diagonal two copies of the above matrices, and obtained exactly identical results, which confirms numerically the observation of Theorem~\ref{thm:convergence_rate}. 

\begin{figure}
 \centering
 \includegraphics[width=.45\textwidth]{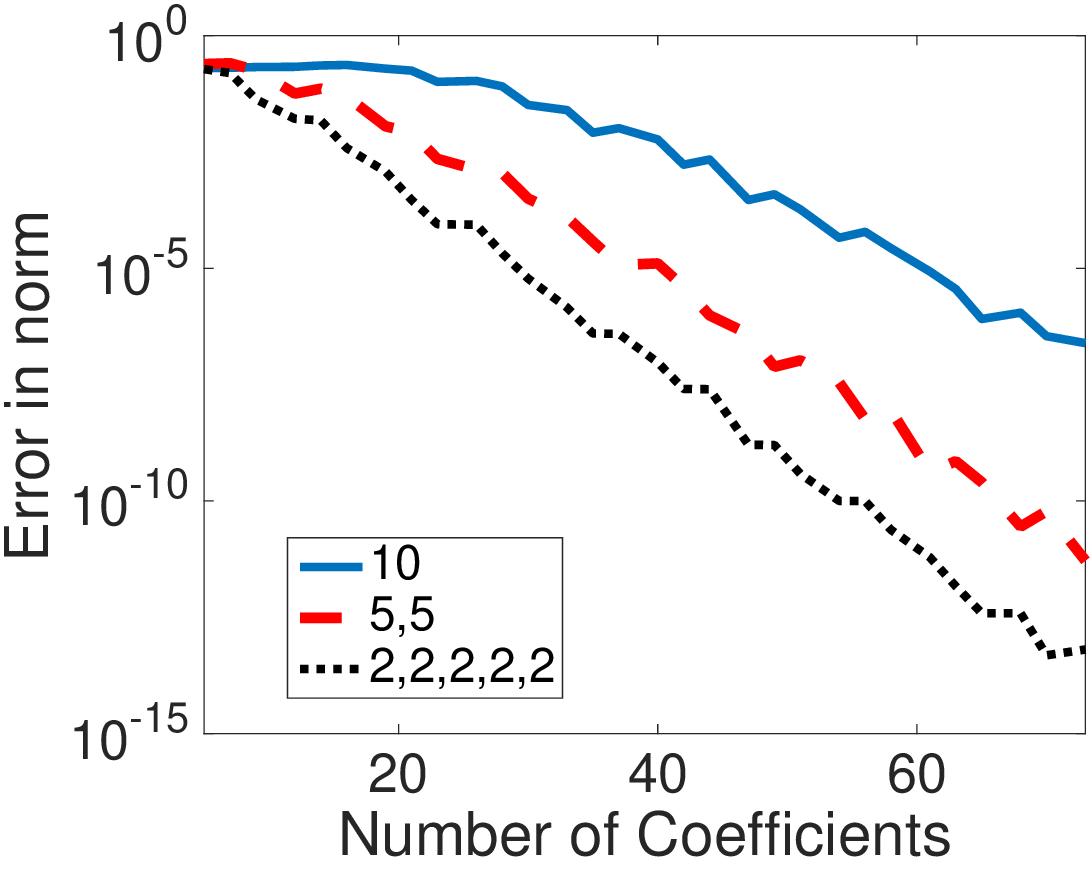}
 \caption{Convergence rates of $f_3$ from \eqref{eqn:f3_numerics} for three different block diagonal matrices. Each block corresponds to the eigenvalue $0.5$. In the legend are the different sizes of the blocks on the diagonal of our test matrix.}
 \label{fig:different_block_sizes}
\end{figure}


\subsection{Application to eigenspace recovery} \label{subsec:application}

We demonstrate the application of matrix Chebyshev expansions for estimating an eigenspace which corresponds to a given eigenvalue $\lambda$ of a matrix. We omit the classical case of the largest magnitude eigenvalue and consider the problem called the 'interior' eigenvalue problem. The standard solution for this problem is by the inverse power method, see \cite[Chapter 8]{golub2012matrix}, which applies the power method on $(A-\lambda I)^{-1}$. Applying the inverse power method requires solving at each iteration a system of linear equations of the same size as the matrix $A$. Thus, for large matrices, the conventional inverse power method might be computationally too expensive. Our demonstration here is a variant of a different approach, also known as Chebyshev acceleration \cite{saad1984chebyshev}. In this approach, instead of applying the power method (or one of its extension like the Lanczos algorithm) directly to $(A-\lambda I)^{-1}$, we apply it on a polynomial approximation of the inverse function. Nevertheless, the use of the inverse function $(x-\lambda)^{-1}$ is merely one example of a broader concept of filtering functions that increase the magnitude of a specific part of the spectrum while suppressing the rest.

The different variants of the above Chebyshev acceleration approach usually differ in the filter function they use. In \cite{fang2012filtered}, splines are used to form a filter function, and its polynomial approximation is applied to the matrix. A later approach \cite{li2016thick} directly uses a polynomial approximation to the Dirac function, based on least squares combined with a damping factor. This technique is derived from the Jackson kernel to avoid wigglings of the approximant. These variants aim to recover the eigenspaces associated with particular eigenvalues inside a segment (also termed 'spectrum slicing'). The ideal filter for such a task is the characteristic function over the required slice, but polynomial approximation of this ideal-filter is problematic due to its poor smoothness, as it is not even continuous.  

In our example, we use the filter function, 
\begin{equation} \label{eqn:filterFunc}
 f(x) =\frac{1}{2}\left( 1- \erf\left(  \frac{2}{r}( \abs{(x-c)}-R)   \right)  \right) , 
\end{equation}
centered around the eigenvalue whose eigenspace we want to recover. In~\eqref{eqn:filterFunc}, $\erf$ is the error function (integration over the Gaussian function), $R$ is a width parameter, and $r$ controls the steepness of the transition from zero to one (the smaller $r$ the steepest $f(x)$ is). The function $f(x)$ is smooth except at the center point $c$, where its first derivative has a jump discontinuity due to the absolute value. This discontinuity implies Gibbs phenomenon when approximating $f$ with the truncated Chebyshev expansion. Namely, there is always a fixed magnitude of approximation error centered around $c$. However, approximation errors around $c$ are negligible as the main objective is to suppress eigenvalues outside the region of interest. The parameter $R$ is roughly the distance to the nearby eigenvalue, if such an estimation is available. An illustration of \eqref{eqn:filterFunc} with different sets of parameters is given in Figure~\ref{fig:filteFunc}. 
\begin{figure}[ht]
 \centering
 \includegraphics[width=.45\textwidth]{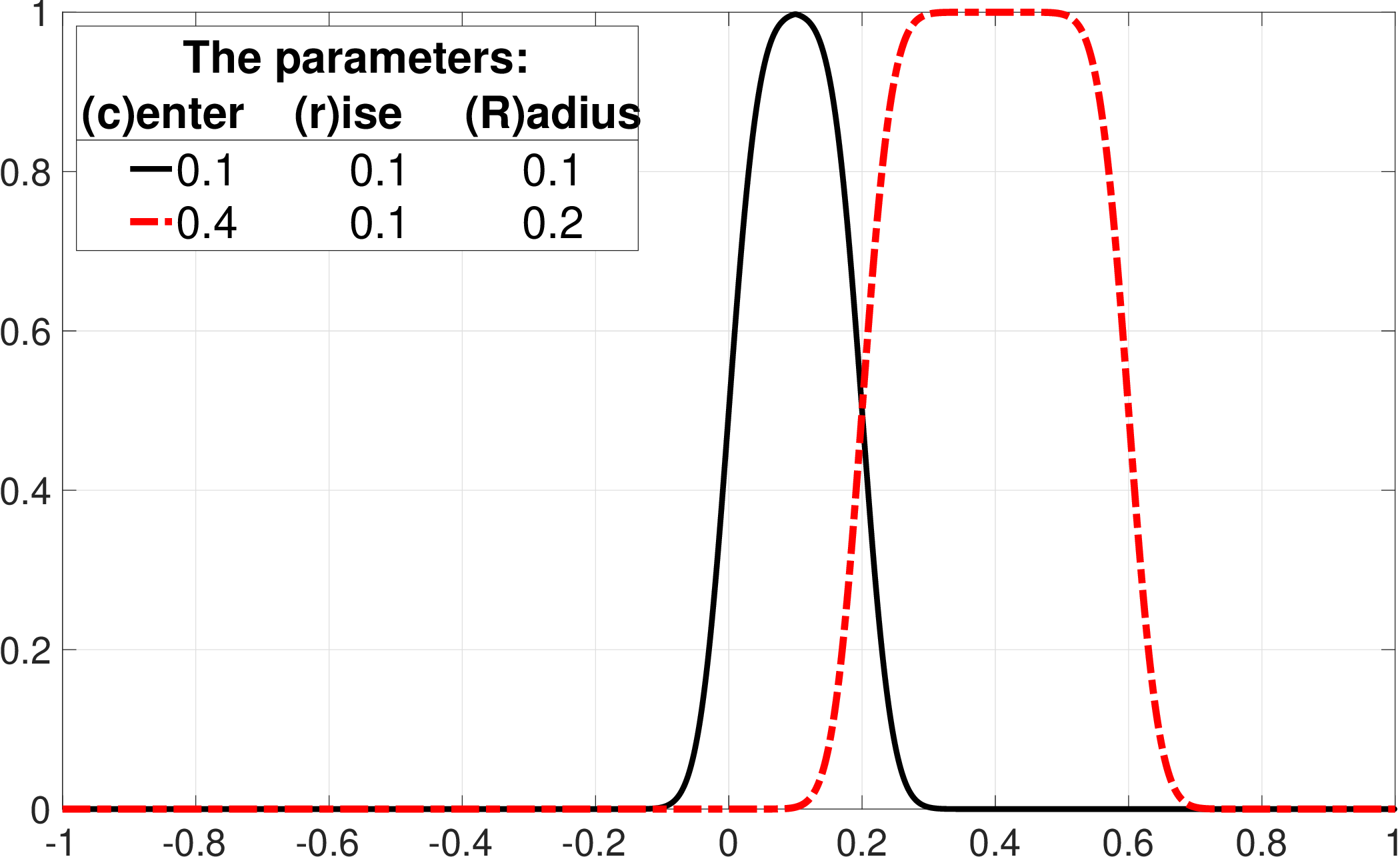}
  \caption{The filter function \eqref{eqn:filterFunc} for two different sets of parameters}
 \label{fig:filteFunc}
\end{figure}

We consider a scenario where the matrix size is large, and moreover we only have access to a function that applies the matrix to any a vector. In other words, we cannot access the entries of the matrix. Also, we a priori know an approximation of the eigenvalue whose eigenspace we wish to recover. To make this scenario realistic, we must assume there is some gap between this eigenvalue and the rest of the spectrum, for otherwise, any known variant of the power method would converge very slowly. We choose the parameters of the filter as follows: $c$ is the approximated eigenvalue, and $r$ and $R$ depend on the gap. As we decrease $r$ (corresponding to a smaller gap), the derivatives of $f$ increase, which means we have to use more coefficients in the truncated Chebyshev expansion to obtain the same prescribed accuracy of the approximation.

In the following, we present two examples. In the first example, we generate symmetric matrices of size up to $15,000 \times 15,000$, which is (approximately) the largest dense-matrix that fits in memory on our machine. Each matrix has a spectrum consisting of ones, zeros, and halves. The goal is to reveal the $20$ eigenvectors corresponding to the eigenvalue $0.5$. We compare two methods for this problem. The first is the inverse power method, implemented by solving a linear system of the form $(A-\lambda I)x=u$ with $\lambda=0.5$ at each iteration using Matlab's implementation (the \textit{EIGS} function with the backslash $A \backslash b$ solver). The second method is based on filtering via matrix Chebyshev expansion; we apply an evaluation of the matrix function $f(A)$, with the filter $f$ of~\eqref{eqn:filterFunc}, on an initial set of random vectors whose number is at least as large as the dimension of the subspace we wish to recover. To make a fair comparison between the two methods, we require the same precision of $10^{-10}$ when measuring the error by
$ \sum_{j=1}^k  \norm{Au_j-\lambda u_j} $.
To reach such a high precision using the matrix Chebyshev expansion, we need to truncate it at large $N$. However, that means computing many matrix-vector multiplications. Another way that we found quite efficient is to approximate $f(x)$ using some small $N$ such as $N=10$, but then to iterate the approximation several times. In particular, we reach the required precision in about seven such iterations. We introduce the results of the comparison between the two algorithms in Figure~\ref{fig:ComparisonEig}, where the gap between the two methods becomes larger as the size of the matrix increases.
\begin{figure}[ht]
 \centering
 \includegraphics[width=.45\textwidth]{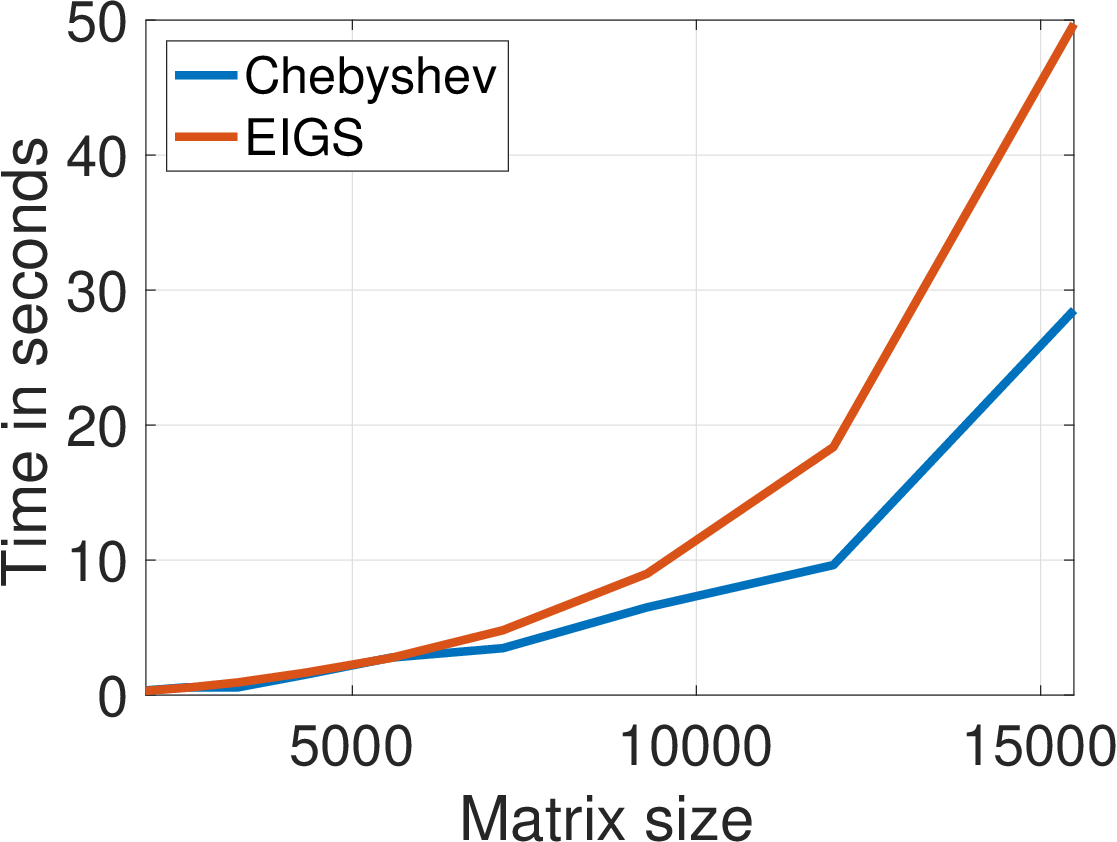}
  \caption{A timing comparison between iterative power method (by Matlab) and eigenspace recovery using matrix Chebyshev expansion. Time is measured in seconds, as a function of the matrix size.}
 \label{fig:ComparisonEig}
\end{figure}

In our second example, we use a particular class of matrices which are of the form $Q^TDQ$, where $D$ is a real diagonal matrix (the real spectrum), and $Q$ is the orthogonal matrix of the discrete cosine transform (DCT). Namely, for a vector $x$, $Qx$ is the discrete cosine transform of type $II$ of $x$. The advantage of using such a matrix for our example is that computing the vector products $Qx$ and $Q^{-1}y=Q^{T}y$ for any vectors $x$ and $y$ is extremely fast. We examine four sizes of matrices ranging from $50,000$ to $1,000,000$, each with a spectrum constructed as in the first example. Note that such large dense matrices cannot fit into the RAM of our machine. We measure the time required by our algorithm to estimate the required eigenspace to precision of $10^{-10}$, and report the results in Table~\ref{table:hugeMat}, indicating how fast and efficient it is to use the matrix Chebyshev expansion for this task. Note that solving linear systems of these sizes using conventional methods is far from being trivial and thus applying the iterative power method is intricate.

\begin{table}
\centering
\begin{tabular}{|c|c|c|c|c|} \hline
Matrix size     & 50,000 & 100,000 & 500,000 & 1,000,000  \\  \hline \hline
Timing (sec.) &   3.617 &   8.658 &  51.970 & 108.258  \\  \hline
Precision       & 2.758e-10 & 1.660e-11 & 3.677e-11 & 5.190e-11   \\  \hline
\end{tabular}
\caption{Timing (in seconds) of eigenspace recovery for big matrices.}
\label{table:hugeMat}
\end{table} 


\section*{Acknowledgments}
The authors would like to thank the anonymous reviewers for their valuable comments and suggestions.


\begin{appendices}
\label{appendix}


\section{Complementary proofs} \label{apx:proofsSection}

\subsection{Proof of Lemma \ref{lemma:Chebyshev_expansion_prop_from_fourier}} \label{sec:appendix_background_lemma}
\begin{proof}
By Theorem \ref{thm:Trefethen} we have that $\abs{\alpha_n[f]} \le \frac{C_f}{n^{2m}} $, with a positive constant $C_f$. By the bound \eqref{eqn:jordan_block_form},
we have that $\abs{\alpha_n[f] T_n^{(j)}(x)} \le M_n = C_f\frac{n^{2j}}{(2j-1)!n^{2m}} $, for any $ j=0,1,\ldots,m-1$, and thus
\begin{equation} \label{eqn:decay_derivative}
\sum_{n=1}^\infty M_n = \frac{C_f}{(2j-1)!}\sum_{n=1}^\infty  \frac{1}{n^{2(m-j)+2}} < \infty ,\quad j \le m-1 .
\end{equation}
Equation~\eqref{eqn:decay_derivative} guarantees that $\sumd{\infty} \abs{\alpha_n[f] T_n^{(j)}(x)} < \infty$, as required.

For the second part of Lemma~\ref{lemma:Chebyshev_expansion_prop_from_fourier}, we can use the Weierstrass M-test to conclude uniform convergence of
 \[ \sumd{\infty} \alpha_n[f] T^{(j)}_n(x) , \quad j=0,1,\ldots,m-1. \]
Moreover, it is clear that
\[ \sumd{N} \alpha_n[f] T^{(j)}_n(x) , \quad N \in \mathbb{N} , \quad j=0,1,\ldots,m-1 \]
 is a polynomial of degree $N$ and so obviously differentiable, and thus, we can differentiate it term-by-term.
\end{proof}

\subsection{The second leading order term of \texorpdfstring{$T^{(k)}_n(x)$}{Tkn(x)}  } \label{apx:second_LOT}

As noted in Remark \ref{rmk:LOT}, we can derive a bound on the constant in front of the $n^{k-1}$ term of the $k$-th derivative $T^{(k)}_n(x)$. We denote by $B_k$ this bound and prove by induction that 
\begin{equation} \label{eqn:boundOnBk}
B_k \le \frac{k(k-1)}{2} .
\end{equation}
For the base of the induction, we get from~\eqref{eqn:high_order_derivatives} that the constants are equal to $0$, $1$ and $3$, for $k=1,2$ and $3$, respectively. Assume that the claim is true for any $j$ that satisfies $0 \le j \le k <n $, for a fixed $k$. Then, for $k+1$, by \eqref{eqn:high_order_derivatives} we get the $n^k$ terms out of two factors: from the $n^k$ (leading order) term of $T^{(k)}_n$ that is multiplied by $(2k-1)\nu^2$, and from the $n^{k-2}$ term (second leading order) of $T^{(k-1)}_n$ that is multiplied by $\nu^2$. Combining the two factors leads to
\[ B_{k+1} \le (2k-1)\nu^{k+2} + \nu^2 B_{k-1} . \]
The latter recursion can be solved by repeatedly substituting \eqref{eqn:boundOnBk} to yield
\[ B_{k+1} \le \nu^{k+2}\sum_{j=1}^{\lfloor (k+1)/2 \rfloor} \left( 2(k-2j)+1 \right) = \frac{k(k+1)}{2}\nu^{k+2} , \]
as required for $k+1$.

\subsection{Proof of Theorem \ref{thm:convergence_rate}  } \label{apx:proofTruncError}

We organize the proof as follows. First, we define and briefly analyze an auxiliary Chebyshev term-by-term operator, acting from $\mathbb{R}$ to $\rms$. Next, we show how to adapt Theorem~\ref{thm:Trefethen} to the auxiliary operator. Finally, we show how to use this auxiliary operator for proving Theorem~\ref{thm:convergence_rate}.

We now define our auxiliary operator.
\begin{definition} \label{def:Chebyshev_term_by_term}
The term-by-term Chebyshev polynomial of order $N$ for a function $f \colon [-1,1] \to \rms$ is
\[  Q_N(f)(x) =  \sumd{N}\beta_n \left [f \right ] T_n(x)  , \]
where $\beta_n \left [f \right ] \in \rms$ with $(\beta_n \left [ f \right ])_{i,j} = \alpha_n \left [ f(x)_{i,j}\right ]$ for all $1 \le i,j \le k $.
\end{definition}
The coefficients $\{\beta_n\}_{n=0}^N$ of $Q_N$ are matrices defined by $\alpha_n$ (see~\eqref{eqn:chebyshev_series_scalars}) in each entry. We state the following relation between the Chebyshev term-by-term operator of Definition~\ref{def:Chebyshev_term_by_term} and the truncated Chebyshev expansion for scalar functions~\eqref{eqn:truncated_chebyshev}.
\begin{lemma}  \label{lemma:tr_ip_commute}
Let $f \colon [-1,1] \to \rms$ be a matrix-valued function such that $f(x)_{i,j} \in C([-1,1])$, for all $1 \le i,j \le k $. Then, for any fixed $B \in \rms$
\[
 \alpha_n \left [   \tr \left( f(t) B^T  \right) \right ] = \tr \left( \beta_n  \left[ f(t)  \right]  B^T \right) .
\]
In words, the trace and the (Chebyshev) inner product commute.
\end{lemma}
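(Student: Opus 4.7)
The plan is to reduce the identity to the linearity of the scalar Chebyshev coefficient functional $\alpha_n$ by expressing the trace in terms of entry-wise sums. First I would use the elementary identity
\[ \tr(X B^T) = \sum_{i,j=1}^k X_{i,j} B_{i,j} , \]
valid for any $X,B \in \rms$, to rewrite the left-hand side as
\[ \alpha_n\left[ \tr\left(f(t) B^T\right) \right] = \alpha_n\left[ \sum_{i,j=1}^k f(t)_{i,j}\, B_{i,j} \right]. \]

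Next I would invoke the linearity of $\alpha_n$ (which is immediate from its integral definition \eqref{eqn:inner_product}, the finiteness of the sum, and the fact that the entries $B_{i,j}$ are scalars independent of $t$) to pull both the finite sum and the constants $B_{i,j}$ outside the functional, obtaining
\[ \sum_{i,j=1}^k B_{i,j}\, \alpha_n\!\left[ f(t)_{i,j} \right]. \]
The continuity assumption $f(t)_{i,j} \in C([-1,1])$ guarantees that each $\alpha_n[f(t)_{i,j}]$ is well defined, so no integrability issue arises.

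Finally I would identify $\alpha_n[f(t)_{i,j}]$ with $(\beta_n[f])_{i,j}$ according to Definition~\ref{def:Chebyshev_term_by_term}, and reassemble the double sum via the same trace identity applied in reverse to recognize it as $\tr(\beta_n[f(t)] B^T)$. There is no real obstacle here: the content of the lemma is essentially the observation that $\alpha_n$ acts entry-wise on matrix-valued functions and that the pairing $\tr(\cdot\, B^T)$ is a linear functional on $\rms$, so the two linear operations commute.
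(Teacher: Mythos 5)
Your proposal is correct and follows essentially the same route as the paper: both write $\tr(f(t)B^T)=\sum_{i,j}f(t)_{i,j}B_{i,j}$, use linearity of $\alpha_n$ (in the paper, written out as interchanging the finite sum with the integral defining $\alpha_n$), and identify $\alpha_n[f(t)_{i,j}]$ with $(\beta_n[f])_{i,j}$ to reassemble the trace. No gaps.
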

\begin{proof}
Define $h\colon [-1,1] \to \mathbb{R}$ by $h(t) = \tr \left( f(t) B^T  \right)$. The trace operator is linear and since $f$ is continuous in each entry so is $h$. Therefore, $\alpha_n\left [h\right ]$ is well-defined and
\begin{eqnarray*}
 \alpha_n \left [ h \right ] = \alpha_n \left[   \tr \left( f(t) B^T  \right) \right ] &=& \frac{2}{\pi} \int_{-1}^1 \frac{ \sum_{i,j=1}^k f(t)_{i,j} B_{i,j}  T_n(t)}{\sqrt{1-t^2}} dt \\
                  &=& \sum_{i,j=1}^k B_{i,j} \frac{2}{\pi} \int_{-1}^1 \frac{  f(t)_{i,j}   T_n(t)}{\sqrt{1-t^2}} dt \\
                  &=& \sum_{i,j=1}^k B_{i,j} \alpha_n \left [f(t)_{i,j}\right ] =  \tr \left( \beta_n  \left[ f(t)  \right]   B^T \right) .
\end{eqnarray*}
\end{proof}
Equipped with Lemma \ref{lemma:tr_ip_commute}, we are ready to lift Theorem \ref{thm:Trefethen} to the case of the Chebyshev term-by-term operator for matrix-valued functions. To this end, we use a result from the matrix duality theory: for any $X\in \rms$ and a matrix norm $\norm{\cdot}$, there exists a matrix $Y$ such that $\norm{X} =  \tr(XY^T)$ and $\norm{Y}^D = 1$, where $\norm{\cdot}^D$ is the dual norm defined as
\[ \norm{X}^D  = \max_{\norm{Y} \le 1} \{ \tr(XY^T)  \} .\]
The duality ensures that $(\norm{\cdot}^D)^D = \norm{\cdot}$ and that $\abs{\tr(XY^T)} \le \norm{X}\norm{Y}^D$. For more details see~\cite[Chapter 5]{horn2012matrix}). 
\begin{theorem}  \label{thm:error_term_by_term}
Let $f \colon [-1,1] \to \rms$ be such that $f_{i,j} \in C^{m-1}([-1,1])$ where $f_{i,j}^{(m-1)}$ is absolutely continuous and $f_{i,j}^{(m)}$ is of bounded variation for any $1 \le i,j \le k$. Then,
\[ \norm{f(x)-Q_N(f)(x)} \le C \frac{1}{(N-m)^m}  ,   \quad N>m,  \]
where $C$ is a constant independent of $x$ and $N$.
\end{theorem}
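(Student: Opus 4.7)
The plan is to use matrix-norm duality to reduce the matrix-valued estimate to the scalar Trefethen bound (Theorem \ref{thm:Trefethen}), with Lemma \ref{lemma:tr_ip_commute} providing the bridge. Fix $x \in [-1,1]$. By the duality of matrix norms recalled just before the theorem, there exists $Y \in \rms$ with $\norm{Y}^D = 1$ such that $\norm{f(x) - Q_N(f)(x)} = \tr\bigl((f(x) - Q_N(f)(x)) Y^T\bigr)$. Define the scalar function $h(t) = \tr(f(t) Y^T)$. Lemma \ref{lemma:tr_ip_commute} gives $\alpha_n[h] = \tr(\beta_n[f] Y^T)$ for every $n$, and hence by linearity of the trace
\[
\tr(Q_N(f)(x) Y^T) = \sumd{N} \tr(\beta_n[f] Y^T) T_n(x) = \sumd{N} \alpha_n[h] T_n(x) = S_N(h)(x).
\]
Therefore $\norm{f(x) - Q_N(f)(x)} = h(x) - S_N(h)(x)$, reducing the matrix-valued estimate to a scalar Chebyshev truncation error.

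Next I would verify that $h$ inherits the smoothness hypotheses. Since $h(t) = \sum_{i,j} Y_{i,j}\, f_{i,j}(t)$ is a finite linear combination of the entries of $f$, one immediately has $h \in C^{m-1}([-1,1])$ with $h^{(m-1)}$ absolutely continuous, and from the partition-sum definition of total variation
\[
\norm{h^{(m)}}_{TV} \le \sum_{i,j} |Y_{i,j}|\, \norm{f_{i,j}^{(m)}}_{TV}.
\]
Theorem \ref{thm:Trefethen} then yields, for $N>m$,
\[
|h(x) - S_N(h)(x)| \le \frac{2}{\pi m}\, \norm{h^{(m)}}_{TV} \cdot \frac{1}{(N-m)^m}.
\]

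To convert this into a bound independent of the particular dual element $Y$, I would invoke equivalence of matrix norms on $\rms$: the dual norm $\norm{\cdot}^D$ is equivalent to the entrywise max norm, so $\norm{Y}^D = 1$ forces $|Y_{i,j}| \le c_k$ for every $i,j$, with $c_k$ depending only on $k$. Substituting this into the bound on $\norm{h^{(m)}}_{TV}$ and combining with the previous display produces
\[
\norm{f(x) - Q_N(f)(x)} \le \Biggl( \frac{2 c_k}{\pi m} \sum_{i,j} \norm{f_{i,j}^{(m)}}_{TV} \Biggr) \frac{1}{(N-m)^m},
\]
which is the desired estimate with a constant depending only on $f$, $m$, and $k$, and in particular independent of $x$ and $N$.

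The step I expect to require the most care is the TV-norm inheritance: when $h^{(m)}$ is only of bounded variation (rather than differentiable), the inequality $\norm{h^{(m)}}_{TV} \le \sum_{i,j} |Y_{i,j}|\, \norm{f_{i,j}^{(m)}}_{TV}$ must be extracted from the supremum-of-partition-sums definition of total variation rather than from an integral of $|h^{(m+1)}|$. This is nonetheless routine, since for any partition the telescoping differences of $h^{(m)}$ are controlled entrywise by a linear combination of those of the $f_{i,j}^{(m)}$, and taking the supremum over partitions preserves the inequality. Once this technicality is settled, the rest of the argument is a clean composition of the duality identity, Lemma \ref{lemma:tr_ip_commute}, and Theorem \ref{thm:Trefethen}.
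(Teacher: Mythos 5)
Your proposal is correct and follows essentially the same route as the paper: fix $x$, pick a dual element $Y$ with $\norm{Y}^D=1$ realizing the norm of the error matrix, set $h(t)=\tr(f(t)Y^T)$, use Lemma~\ref{lemma:tr_ip_commute} to identify $\tr(Q_N(f)(x)Y^T)$ with $\mathcal{S}_N(h)(x)$, and then invoke Theorem~\ref{thm:Trefethen} for the scalar function $h$. The one substantive divergence is the final estimate of $\norm{h^{(m)}}_{TV}$: you expand $h=\sum_{i,j}Y_{i,j}f_{i,j}$ and control $\abs{Y_{i,j}}$ via equivalence of the dual norm with the entrywise max norm, which yields a constant of the form $c_k\sum_{i,j}\norm{f_{i,j}^{(m)}}_{TV}$ and hence depends on the matrix order $k$; the paper instead applies the duality inequality $\abs{\tr(AB^T)}\le\norm{A}\norm{B}^D$ directly to $h^{(m+1)}(t)=\tr(f^{(m+1)}(t)B_0^T)$, obtaining $\norm{h^{(m)}}_{TV}\le\int_{-1}^1\norm{f^{(m+1)}(t)}\,dt$ and thus a constant independent of $k$. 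For the theorem as literally stated (independence of $x$ and $N$ only) your bound suffices, but the $k$-independence is precisely what this duality machinery is for and is what feeds into Theorem~\ref{thm:convergence_rate}, whose constant is claimed independent of the matrix size; so if you intend the result to be used as in the paper, you should keep the trace--duality bound at this step rather than the entrywise one. On the other hand, your handling of the bounded-variation technicality via the partition-sum definition is cleaner than the paper's derivative-based formula, which implicitly assumes $f^{(m+1)}$ exists in an integrable (or distributional) sense.
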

\begin{proof}
Denote by $E_N(Q,f)(x) =  f(x)-Q_N(f)(x)  $ the error matrix at $x$. By the duality theorem for matrix norms, for any given $A\in \rms$ and a matrix norm $\norm{\cdot}$, there exists a matrix $B$ with $\norm{B}^D = 1$ such that $\norm{A} =  \tr(AB^T)$. Now, let $x_0 \in [-1,1]$ be a fixed point and let $B_0$ be the corresponding matrix such that
\[ \norm{ E_N(Q,f)(x_0) }  = \tr \left( E_N(Q,f)(x_0) B_0^T \right) = \tr \left( f(x_0) B_0^T  \right) -  \tr \left( Q_N(f)(x_0) B_0^T  \right)  .\]
In other words, $\norm{E_N(Q,f)(x_0) } = h(x_0)  -  \tr \left(\sumd{N} \beta_n \left [f \right] T_n(x_0) B_0^T  \right)$ where $h(t) = \tr \left( f(t) B_0^T  \right) $ (as in the proof of Lemma \ref{lemma:tr_ip_commute}). Note that $T_n(x_0)$, $0 \le n \le N$, are scalars, and by the linearity of the trace operator
\begin{equation} \label{eqn:trEqualityA2}
 \tr \left(\sumd{N} \beta_n \left [f\right ] T_n(x_0) B_0^T  \right) =  \sumd{N} T_n(x_0) \tr \left( \beta_n \left [f \right ]  B_0^T  \right) .
\end{equation}
By Lemma \ref{lemma:tr_ip_commute}, the sum on the right-hand-side of \eqref{eqn:trEqualityA2} is equal to $\sumd{N} \alpha_n \left[   \tr \left( f(t) B_0^T  \right) \right] T_n(x_0)$, which means that
\begin{equation} \label{eqn:chebyshev_like_error_to_h}
 \norm{ E_N(Q,f)(x_0) } = h(x_0) - \sumd{N} \alpha_n [h] T_n(x_0) .
\end{equation}
The right hand side of \eqref{eqn:chebyshev_like_error_to_h} is the error in approximating $h$ by Chebyshev polynomials, and therefore, to bound it using Theorem~\ref{thm:Trefethen}, we need to verify that $h(x)$ satisfies the conditions of that theorem. The regularity of $h$ is directly inherited from the components of $f$, and so
\[ h^{(j)}(x) = \tr \left( f^{(j)}(x) B_0^T  \right) , \quad j=1,\ldots,m. \]
Thus, it remains to verify the finiteness of $\norm{h^{(m)}}_{TV}$.
By the duality theorem, $\abs{\tr(AB^T)} \le \norm{A}\norm{B}^D$, where in our case $\norm{B_0^T}^D=1$, and consequently,
\[ \norm{h^{(m)}}_{TV} \le \int_{-1}^1  \norm{f^{(m+1)}(t)}  \norm{B_0^T}^D dt \le  \int_{-1}^1 \norm{f^{(m+1)}(t) } dt  < \infty .\]
The last inequality can be easily verified using the max norm and the conditions on $f_{i,j}$. Applying Theorem~\ref{thm:Trefethen} to $h$ leads to the required bound due to \eqref{eqn:chebyshev_like_error_to_h}.
\end{proof}

There are two important remarks regarding Theorem \ref{thm:error_term_by_term}. First, the constant $C$ of the bound is independent of $k$ (the size of the matrix) and can be bounded by $\frac{2}{m} \int_{-1}^1 \norm{f^{(m+1)}(t) } dt$. Second, there are no restrictions on the matrix norm, except of being sub-multiplicative. These observations are also true for Theorem~\ref{thm:convergence_rate}.
\begin{proof}[Proof of Theorem~\ref{thm:convergence_rate}]
Let $g \colon [-1,1] \to \rms$ be a matrix-valued function defined by
\begin{equation}\label{eq:gx}
 g(x) = \sumd{\infty} \alpha_n[f] T_n(A) T_n(x).
\end{equation}
Note that Theorem \ref{thm:convergence_cheby} guarantees absolute convergence of the matrix Chebyshev expansion of $f$, and thus, $g$ is well-defined since
\[  \sumd{\infty} \norm{\alpha_n[f] T_n(A) T_n(x)} =\sumd{\infty}  \norm{\alpha_n[f] T_n(A)} \abs{T_n(x)} \le \sumd{\infty}  \norm{\alpha_n[f] T_n(A)} <\infty . \]
On the one hand, $ g(1) = f(A)$ since $T_n(1)=1$ for all $n =0,1,2,\ldots$. On the other hand, using the operator of Definition \ref{def:Chebyshev_term_by_term}
\begin{equation} \label{eqn:Qn_of_1}
 Q_N(g)(1) = \sumd{N} \beta_n \left [g \right ]T_n(1)  = \sumd{N} \beta_n \left [g \right ] ,
\end{equation}
where each matrix $\beta_n \left [g \right ]$ is defined as
\[ \beta_n \left [g \right ]_{i,j} =  \frac{2}{\pi} \int_{-1}^1 \frac{ \left( g(t)  \right)_{i,j} T_n(t) }{\sqrt{1-t^2}} dt
=  \frac{2}{\pi} \int_{-1}^1 \frac{ \left( \sum_{\ell=0}^\infty \alpha_\ell[f] T_\ell(A) T_\ell(t)   \right)_{i,j} T_n(t) } {\sqrt{1-t^2}} dt  . \]
Each element of $ \{ \sum_{\ell=0}^p \alpha_\ell[f] T_\ell(A) \}_{p=1}^\infty $ is bounded by $\sum_{\ell=0}^\infty \norm{\alpha_\ell[f] T_\ell(A)} $, and thus, we can use the bounded convergence theorem to interchange the integral and sum. Therefore, we have
\[ \beta_n \left [g \right ]_{i,j} =  \frac{2}{\pi} \sum_{\ell=0}^\infty \int_{-1}^1 \frac{ \left(  \alpha_\ell[f] T_\ell(A) T_\ell(t)   \right)_{i,j} T_n(t) } {\sqrt{1-t^2}} dt  . \]
Note that $T_\ell(t)$ is a scalar and $\alpha_\ell[f] T_\ell(A)$ is independent of the integration variable. Thus, we can rearrange the latter equation and use the orthogonality of the Chebyshev polynomials to get
\begin{eqnarray*}
 \beta_n \left [g \right ]_{i,j} &=&  \frac{2}{\pi} \sum_{\ell=0}^\infty \left(  \alpha_\ell[f] T_\ell(A)  \right)_{i,j} \int_{-1}^1 \frac{ T_\ell(t)  T_n(t) } {\sqrt{1-t^2}} dt \\
&=&  \sum_{\ell=0}^\infty \left(  \alpha_\ell[f] T_\ell(A)  \right)_{i,j} \delta_{\ell,n} = \left(  \alpha_n[f] T_n(A)  \right)_{i,j},
\end{eqnarray*}
where $\delta_{\ell,n}$ is the Kronecker delta. Combining the latter equation with \eqref{eqn:Qn_of_1} we get that $Q_N(g)(1) =  \sumd{N} \alpha_n[f] T_n(A)=S_N(f)(A)$, and therefore,
\begin{equation} \label{eqn:from_QN_to_SN}
 \norm{g(1) - Q_N(g)(1)} = \norm{f(A)-S_N(f)(A)} .
\end{equation}
It remains to show that the assumptions on $f$ imply that $g\in C^{\ell+1}$ so that we can apply Theorem \ref{thm:error_term_by_term} on $g$ to get the error bound $\frac{C}{(N-\ell)^\ell}$. This is done by considering the bound $\abs{T_n^{(j)}(x)} \le n^{2j}$, and the uniform bound \eqref{eqn:TnA_norm1_bound} on $\norm{T_n(A)}$ from which we get
\[  \sum_{n=1}^{\infty} \norm{\alpha_n[f] T_n(A) T^{(j)}_n(x)} < \infty , \quad  0 \le j \le \ell+1 .\]
Thus, the Weierstrass M-test implies that the convergence is absolute and uniform and so the function $g$, defined by~\eqref{eq:gx}, has the required smoothness.
\end{proof}

\section{Clenshaw's algorithm for matrices} \label{apx:algorithm}

\begin{algorithm}
\begin{algorithmic}[1]
\REQUIRE Matrix $A$ of order $k$ and coefficients $ \{  c_i \}_{i=0}^N$.
\ENSURE Truncated Chebyshev expansion $\sumd{N} c_{n}T_{n}\left(A\right)$.
\STATE $T\gets 2 \cdot A - I_{k}$
\STATE $d\gets 0$
\STATE $dd\gets 0$
\FOR{$n\gets N$  downto $2$}
\STATE $b \gets d $
\STATE $d \gets 2 \cdot T \cdot d - dd + c_n \cdot  I_{k}$ \label{alg:loop_main_line}
\STATE $dd\gets b$
\ENDFOR
\STATE \textbf{return} $T \cdot d - dd + 0.5 \cdot c_0 \cdot I_{k} $  \label{alg:final_line}
\end{algorithmic}
\caption{Clenshaw's algorithm for evaluating a truncated matrix Chebyshev expansion}
\label{alg:clenshaw}
\end{algorithm}


\end{appendices}


\bibliography{BibChebychevBound}
\bibliographystyle{plain}

\end{document}